\newcommand{\N}{\mathbb{N}}
\newcommand{\R}{\mathbb{R}}
\newcommand{\Q}{\mathbb{Q}}
\newcommand{\Z}{\mathbb{Z}}
\newcommand{\C}{\mathbb{C}}
\newcommand{\T}{\mathbb{T}}
\newcommand{\rst}[1]{\ensuremath{{\mathbin\mid}\raise-.5ex\hbox{$#1$}}}
\newcommand{\lien}{\mathfrak{n}}
\newcommand{\lieh}{\mathfrak{h}}
\newcommand{\liem}{\mathfrak{m}}
\newcommand{\lies}{\mathfrak{so}}
\DeclareMathOperator{\trace}{Tr}
\DeclareMathOperator{\Gal}{Gal}
\DeclareMathOperator{\GL}{GL}
\DeclareMathOperator{\SL}{SL}
\DeclareMathOperator{\Aut}{Aut}
\DeclareMathOperator{\sg}{sgn}
\DeclareMathOperator{\Pf}{Pf}
\author{Jonas Der\'e\thanks{The author was supported by a Ph.D.~fellowship of the Research Foundation -- Flanders (FWO).
Research supported by the research Fund of the KU Leuven}\\
KU Leuven Kulak, E. Sabbelaan 53, BE-8500 Kortrijk, Belgium}
\title{\bf A new method for constructing Anosov Lie algebras}
\date{}
\newtheorem{Def}{Definition}[section]
\newtheorem*{Ex}{Example}
\newtheorem{Cor}[Def]{Corollary}
\newtheorem{Thm}[Def]{Theorem}
\newtheorem{Prop}[Def]{Proposition}
\newtheorem{Lem}[Def]{Lemma}
\newtheorem{Rmk}[Def]{Remark}
\newtheorem*{Prop*}{Proposition}
\newtheorem*{Lem*}{Lemma}
\newtheorem{CL}[Def]{(False) Claim}
\newtheorem{QN}{Question}
\newtheorem*{Con}{Conjecture}
\begin{document}

\maketitle

\begin{abstract}
It is conjectured that every closed manifold admitting an Anosov diffeomorphism is, up to homeomorphism, finitely covered by a nilmanifold. 
Motivated by this conjecture, an important problem is to determine which nilmanifolds admit an Anosov diffeomorphism. The main theorem of this article gives a general method for constructing Anosov diffeomorphisms on nilmanifolds. As a consequence, we give new examples which were overlooked in a corollary of the classification of low-dimensional nilmanifolds with Anosov diffeomorphisms and a correction to this statement is proven. This method also answers some open questions about the existence of Anosov diffeomorphisms which are minimal in some sense.
\end{abstract}

A diffeomorphism $f: M \to M$ on a closed manifold is called Anosov if the tangent bundle splits continuously into two $df$-invariant vector bundles $E^s$ and $E^u$, such that $df$ is contracting on $E^s$ and expanding on $E^u$. The standard examples of Anosov diffeomorphisms are induced by unimodular hyperbolic automorphisms of $\R^n$ on the $n$-torus $\T^n$, considered as quotient space $\faktor{\R^n}{\Z^n}$. The first non-toral example was given by S. Smale in his paper \cite{smal67-1} where he also raised the question of classifying all closed manifolds admitting an Anosov diffeomorphism. 

It is conjectured that every Anosov diffeomorphism is topologically conjugate to an affine infra-nilmanifold automorphism. A proof of this conjecture would thus imply that every closed manifold admitting an Anosov diffeomorphism is homeomorphic to an infra-nilmanifold admitting an Anosov diffeomorphism. More details about the definition and the conjecture can be found in \cite{deki11-1,smal67-1}. In recent years, there has been quite some research about classifying infra-nilmanifolds supporting an Anosov diffeomorphism as well as constructing new examples with specific properties.

The conjecture motivates the study of infra-nilmanifolds supporting an Anosov diffeomorphism and in this paper we give a general method for constructing examples in the case of nilmanifolds. The existence of an Anosov diffeomorphism on a nilmanifold is equivalent to the existence of an Anosov automorphism on the corresponding nilpotent Lie algebra, i.e.\ a hyperbolic and integer-like automorphism (see Section \ref{intro} for more details). One way to construct Anosov automorphisms on Lie algebras is to start from an automorphism of a free nilpotent Lie algebra and take a quotient by an ideal which is invariant under the automorphism, see for example the papers \cite{dm05-1,dd03-2,frie81-1, payn09-1}. Another type of construction, as was used in \cite{laur03-1,lw08-1,lw09-1,mw07-1}, starts from a nilpotent Lie algebra $\lien^E$ over some field extension $E$ of $\Q$ with a hyperbolic automorphism and then gives a rational form of the Lie algebra which is invariant under the automorphism. The hard step in the latter case is to construct an explicit basis of $\lien^E$ such that the structure constants are rational and the matrix representation of the automorphism has rational entries. Note that in these papers, it is checked for each example separately that the given set of vectors is in fact a basis, that the structure constants do lie in $\Q$ and that the matrix of $f$ with respect to this basis has entries in $\Q$. The computations needed for these steps are rather cumbersome and somewhat time-consuming and these can be avoided by the main theorem of this paper.

The main theorem of this article generalizes this second method and states that given a hyperbolic and integer-like automorphism of a Lie algebra $\lien^E$ which behaves `nicely' under the action of $\Gal(E,\Q)$ (see Section \ref{secGC} for the exact statement), there always exists a rational form of $\lien^E$ which is an Anosov Lie algebra. Not only does this technique significantly shorten the construction of the examples in \cite{laur03-1,lw08-1,lw09-1,mw07-1}, it also allows us to give new interesting examples of Anosov diffeomorphisms on nilmanifolds. For example, we can give a positive answer to some existence questions stated in \cite{laur03-1,lw09-1}. 

The construction from the main theorem is also important for the classification of infra-nilmanifolds supporting an Anosov diffeomorphism. The only general classification is given by Porteous in \cite{port72-1} for flat manifolds and this was generalized to infra-nilmanifolds modeled on free nilpotent Lie groups in \cite{dd13-1}, based on the work of \cite{dv09-1,dv11-1}. For nilmanifolds, there is a classification of Anosov automorphisms on Lie algebras up to dimension $8$ in \cite{lw09-1}. One of the consequences of the main theorem is the construction of some new examples which were overlooked in this classification and we formulate a correction to this statement in Section \ref{verb}. 

This article is built up as follows. In the first section, we give the main definitions about Anosov diffeomorphisms on nilmanifolds and state the correspondence between Anosov diffeomorphisms and hyperbolic, integer-like automorphisms of rational Lie algebras. The second part discusses the exact statement and proof of the main theorem. The last part describes the consequences of the main theorem, including the new examples answering the questions mentioned above.

\section{Anosov diffeomorphisms on nilmanifolds}
\label{intro}

We start with recalling the definitions and main results about Anosov diffeomorphisms on nilmanifolds. We also introduce the signature of an Anosov diffeomorphism and the type of a nilpotent Lie algebra, including some existence questions about Anosov automorphisms we will answer in this paper. 

\smallskip

Let $N$ be a connected, simply connected, nilpotent Lie group with a lattice $\Gamma$, i.e.\ a discrete subgroup $\Gamma \subset N$ with compact quotient $\Gamma \backslash N$. The quotient space $\Gamma \backslash N$ is a closed manifold with fundamental group $\Gamma$ and is called a nilmanifold. The group $\Gamma$ is nilpotent, finitely generated and torsion-free and every group satisfying those three properties occurs as the fundamental group of a nilmanifold (see e.g. \cite{deki96-1}). Every Lie group automorphism $\alpha: N \to N$ induces a Lie algebra automorphism on the Lie algebra of $N$ and the eigenvalues of $\alpha$ are defined as the eigenvalues of this Lie algebra automorphism. If $\alpha \in \Aut(N)$ is a hyperbolic automorphism, meaning that it has no eigenvalues of absolute value $1$, and if $\alpha(\Gamma) = \Gamma$, then $\alpha$ induces an Anosov diffeomorphism on the nilmanifold $\Gamma \backslash N$ and the induced map is called a hyperbolic nilmanifold automorphism. For more details and the general definitions of infra-nilmanifolds and affine infra-nilmanifold automorphisms we refer to the paper \cite{deki11-1}.

Let $\Gamma \backslash N$ be a nilmanifold and denote by $\Gamma^\Q$ the radicable hull of $\Gamma$ (also called rational Mal'cev completion). Corresponding to this radicable hull $\Gamma^\Q$, there is also a rational Lie algebra $\lien^\Q$ and every finite dimensional rational nilpotent Lie algebra can be found in this way (see e.g. \cite{sega83-1}). Two lattices $\Gamma_1$ and $\Gamma_2$ of $N$ are called commensurable if there exist finite index subgroups $\Gamma_1^\prime \le \Gamma_1$ and $\Gamma_2^\prime \le \Gamma_2$ such that $\Gamma_1^\prime$ and $\Gamma_2^\prime$ are isomorphic. This is equivalent to the property that the Lie algebras of the radicable hulls $\Gamma_1^\Q$ and $\Gamma_2^\Q$ are isomorphic.

\smallskip

From \cite[Corollary 3.5.]{deki99-1} and \cite{mann74-1} it follows that a nilmanifold $\Gamma \backslash N$ admits an Anosov diffeomorphism if and only if there exists a hyperbolic and integer-like automorphism of $\lien^\Q$. Recall that a matrix is called integer-like if its characteristic polynomial has coefficients in $\Z$ and its determinant has absolute value $1$.  Since the eigenvalues and characteristic polynomial of a matrix are invariant under conjugation, the properties hyperbolic and integer-like are invariant under change of basis. An automorphism is hyperbolic respectively integer-like if the matrix representation of the automorphism for some (and thus for every) basis has the same property. This motivates the following definition:

\begin{Def}
An automorphism $\alpha \in \Aut(\lien^E)$ of a Lie algebra $\lien^E$ over some field $E \subseteq \C$ is called \emph{Anosov} if it is hyperbolic and integer-like. A rational Lie algebra $\lien^\Q$ with an Anosov automorphism is called \emph{Anosov}.
\end{Def}
\noindent The superscripts $^\Q$ and $^E$ indicate over which field we are working. Thus a classification of all nilmanifolds admitting an Anosov diffeomorphism is equivalent to a classification of all Anosov Lie algebras. A theorem by Jacobson (see \cite{jaco55-1}) shows that every Anosov Lie algebra is necessarily nilpotent. 

\smallskip

Theorem \ref{main} will give us a very general way of constructing Anosov Lie algebras. By using the low-dimensional classification of Anosov Lie algebras, we can describe the isomorphism class for some of those Lie algebras in Section \ref{verb}, but it will be hard to do the same in general. On the other hand, there will be some properties of the Lie algebra or the Anosov automorphisms which follow immediately from the construction, e.g.\ the signature of the Anosov automorphism and the type of the Lie algebra, properties which we introduce below.

\smallskip

The definition of the signature of an Anosov diffeomorphism $f: M \to M$ makes use of the $df$-invariant splitting $TM = E^s \oplus E^u$:
\begin{Def}
The signature $\sg(f)$ of an Anosov diffeomorphism $f: M \to M$ is defined as the set $\{\dim_\R E^s, \dim_\R E^u\}$.
\end{Def}
\noindent  We define the signature as a set rather than as an ordered pair since the order of the elements does not play a role. Indeed, the inverse of an Anosov diffeomorphism is again Anosov with the bundles $E^s$ and $E^u$ interchanged. For Anosov automorphisms $f: \lien^\Q \to \lien^\Q$ on a rational Lie algebra $\lien^\Q$, the signature is defined as the set $\{p,q\}$ where $p$ is the number of eigenvalues with absolute value $<1$ and $q$ the number of eigenvalues of absolute value $> 1$. Note that the proof of \cite[Corollary 3.5.]{deki99-1} implies that there exists an Anosov diffeomorphism $f$ with signature $\sg(f)$ if and only if there exists an Anosov automorphisms with signature $\sg(f)$ on the corresponding Lie algebra. 

The following question is stated in \cite{lw09-1}:
\begin{QN}
\label{q1}
Does there exist an Anosov automorphism on a non-abelian Lie algebra with signature $\{2,k\}$ for some $k \in \N$?
\end{QN}
\noindent In Section \ref{sign} it is shown that $\min(\sg(f)) \geq c$ where $c$ is the nilpotency class of the Lie algebra. So a more general question is the existence of Anosov automorphisms which attain this lower bound for the signature. The main theorem gives us a positive answer in Section \ref{sign}.

As a consequence of a low-dimensional classification of Anosov automorphisms, the following is stated as a corollary in \cite{lw09-1}:
\begin{CL}
\label{corlaur}
Let $\Gamma \backslash N$ be a nilmanifold of dimension $\leq 8$ which admits an Anosov diffeomorphism. Then $\Gamma \backslash N$ is a torus or the dimension is $6$ or $8$ and the signature is $\{3,3\}$ or $\{4,4\}$ respectively.
\end{CL}
\noindent Examples illustrating that some cases were overlooked in this corollary are given in Section \ref{verb} with signature $\{2,4\}$ and $\{3,5\}$. A complete list of Anosov Lie algebras admitting such signatures is given and thus this section forms a correction to the result of \cite{lw09-1}.

\smallskip

If $\lien^E$ is a Lie algebra over a field $E$, then its lower central series $\gamma_i(\lien^E)$ is recursively defined by $\gamma_1(\lien^E) = \lien^E$ and $\gamma_i(\lien^E) = [\lien^E, \gamma_{i-1}(\lien^E) ]$ for all $i \geq 2$. The Lie algebra $\lien^E$ is said to be $c$-step nilpotent (or to have nilpotency class $c$) if $\gamma_{c+1}(\lien^E) = 0$ and $\gamma_{c}(\lien^E) \neq 0$. The type of a Lie algebra gives us information about the quotients of the lower central series:

\begin{Def}
The type of a nilpotent Lie algebra $\lien^E$ of nilpotency class $c$ is defined as the $c$-tuple $(n_1,\ldots,n_c)$, where $n_i = \dim_E \faktor{\gamma_{i}(\lien^E)}{\gamma_{i+1}(\lien^E)}$.
\end{Def}
\noindent If $\lien^\Q$ is an Anosov Lie algebra of type $(3, n_2, \ldots,n_c)$, then \cite[Theorem 1.3.]{payn09-1} states that $3 \mid n_i$ for every $i \in \{2, \ldots, c\}$. Combining this result with \cite[Proposition 2.3.]{lw08-1} we get that every Anosov Lie algebra $\lien^\Q$ of type $(n_1,\ldots,n_c)$ satisfies one of the following:
\begin{enumerate}[(i)]
\item $\lien^\Q$ is abelian or
\item $n_1 \geq 4$ and $n_i \geq 2$ for all $i \in \{2, \ldots, c\}$ or
\item $n_1 = n_2 = 3$ and $3 \mid n_i $ for all $i \in \{3, \ldots, c\}$.
\end{enumerate}
In \cite{laur03-1} it was proved that the lower bound of (ii) occurs for every $c$ and the main theorem shows that also the lower bound of (iii) is attained for every $c$, answering the following question of \cite{lw08-1}:

\begin{QN}
\label{q2}
Does there exist a $c$-step Lie algebra of type $(3, \ldots, 3)$ of nilpotency class $c \geq 3$ which is Anosov?
\end{QN}
\noindent An Anosov Lie algebra which attains one of the lower bounds (ii) or (iii) for the type will be called of minimal type. In fact, in Section \ref{sign} we will show that Question \ref{q2} has a positive answer even if we replace $3$ by any integer $n > 2$. 

\smallskip

To end this first section, we give a short summary of the known results about nilmanifolds supporting an Anosov diffeomorphism. In a few cases, a complete classification is given, for example in low-dimensional cases (see \cite{lw08-1}) and in the case of free nilpotent Lie algebras (see \cite{dani03-1,dd13-1}). 
In \cite{dm05-1,dd03-2,frie81-1, payn09-1}, some new examples of Anosov diffeomorphisms on nilmanifolds were constructed by taking quotients of free nilpotent Lie algebras. This method is based on the work of L. Auslander and J. Scheuneman in \cite{as70-1}. All other examples are explicitly constructed and the main result of this article is to generalize this construction in Theorem \ref{main}. This theorem avoids all the computations and allows us to construct more complicated examples of Anosov automorphisms. It seems plausible that all questions about the existence of Anosov automorphisms of specific signature on a Lie algebra of specific type can be answered in this way. The disadvantage of this method is that in general it is hard to describe the rational Lie algebra in terms of a basis and relations on this basis.

\section{Construction of Anosov Lie algebras}
\label{con}

The examples of Anosov Lie algebras in \cite{laur03-1,lw09-1,mw07-1} are constructed from a nilpotent Lie algebra over some field $E$ given by its decomposition into eigenspaces of an Anosov automorphism. An explicit basis is then constructed which is `symmetric' under the action of the Galois group of $E$. Instead of working with a basis, we rather use this `symmetric' property of the basis as the definition of a rational form of the Lie algebra. We will make this statement precise in Section \ref{secGC}. For automorphisms, a similar `symmetric' property can be defined such that the automorphism induces an automorphism on the rational form. In this way, no explicit calculations are needed and Theorem \ref{main} allows us to easily show the existence of several new Anosov Lie algebras. This method further generalizes the work of \cite{payn09-1} which starts from polynomials instead of field extensions.

\smallskip

First let us fix some notations for this section. Let $E$ be a subfield of $\C$ such that $E$ has finite degree over $\Q$. If $n$ is the extension degree of $E \supseteq \Q$, then $E$ is called a Galois extension of $\Q$ if the group $$\Gal(E,\Q) = \{ \sigma: E \to E \mid \sigma \text{ is a field automorphism} \}$$ has order $n$. We will always assume that our field extensions are Galois. 

A vector space $V$ over $E$ will be denoted by $V^E$ to indicate over which field we are working, as we already did above in Section \ref{intro}. All the vector spaces and Lie algebras we consider are finite dimensional. If $F \supseteq E$ is a field extension, then we can consider the vector space $F \otimes_E V^E$ which we will denote as $V^F$. If $G$ is a finite group and $\rho: G \to \GL(V^E)$ is a representation, then there also exists a representation $\rho^F: G \to \GL(V^F)$ by extending the scalars, i.e.\ by considering $\GL(V^E)$ as a subgroup of $\GL(V^F)$. The same notations will be used for Lie algebras, e.g.\ a Lie algebra over the rationals $\Q$ will be denoted by $\lien^\Q$. We say that two representations $\rho_1: G \to \GL(V^E)$ and $\rho_2: G \to \GL(W^E)$ are $E$-equivalent if there exists an isomorphism $\varphi: V^E \to W^E$ such that $\rho_2(g) \circ \varphi = \varphi \circ \rho_1(g)$ for all $g \in G$.

A rational subspace $W^\Q \subseteq V^E$ is called a rational form if some (and hence every) basis of $W^\Q$ over $\Q$ is also a basis of $V^E$ over $E$. If $\lien^E$ is a Lie algebra over $E$, we call a rational subalgebra $\liem^\Q \subseteq \lien^E$ a rational form if it is a rational form seen as subspace of $\lien^E$ as vector space. 

\subsection{Construction of a rational form}
\label{secGC}

Instead of focusing on the basis of the rational form of a vector space, we focus on the defining property of the rational form as being `symmetric' under the action of the Galois group. For this we first introduce the action of the Galois group on a vector space and define for each representation a rational subspace.

\smallskip

Consider the natural right action of $\Gal(E,\Q)$ on the field $E$, given by $$ \forall \sigma \in \Gal(E,\Q), \forall x \in E:  x^\sigma = \sigma^{-1}(x).$$ 
By defining the action component-wise, there is also a natural right action on the vector space $E^m$. Note that the relations
\begin{equation}
\label{relatie}
\begin{aligned}[c]
\left(\sigma(\lambda) x\right)^\sigma = \lambda x^\sigma
\end{aligned}
\text{ and }
\begin{aligned}[c]
\left(x + y\right)^\sigma = x^\sigma + y^\sigma
\end{aligned}
\end{equation}
hold for all $x,y \in E^m$, $\sigma \in \Gal(E,\Q)$ and $\lambda \in E$. Let $\rho: \Gal(E,\Q) \to \GL_m(E)$ be a representation, then it follows immediately that the subset defined as \begin{align}
V^\Q_\rho = \{ v  \in E^m \mid \forall \sigma \in \Gal(E,\Q), \rho_\sigma(v) = v^\sigma \} \label{def}
\end{align} is a rational subspace of $E^m$. This subspace is already close to being a rational form of $E^m$ in the sense of the following lemma:

\begin{Lem}
\label{ratform}
If a set of vectors $v_1, \ldots, v_k$ of $V^\Q_\rho$ is linearly independent over $\Q$, then this set is also linearly independent over $E$ as vectors of $E^m$.
\end{Lem}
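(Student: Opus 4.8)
I would argue by contradiction. Suppose $v_1, \ldots, v_k \in V^\Q_\rho$ are linearly independent over $\Q$ but linearly dependent over $E$. Among all nontrivial $E$-linear relations $\sum_{i=1}^k \lambda_i v_i = 0$ with $\lambda_i \in E$, choose one with the fewest nonzero coefficients; after reindexing and scaling we may assume $\lambda_1 = 1$ and that $\lambda_1, \ldots, \lambda_r$ are the nonzero coefficients, so
\begin{equation*}
v_1 + \lambda_2 v_2 + \cdots + \lambda_r v_r = 0,
\end{equation*}
with $r \geq 2$ by $\Q$-independence. The standard trick is now to apply the Galois action and subtract. For $\sigma \in \Gal(E,\Q)$, apply $\rho_\sigma$ to the relation; since each $v_i \in V^\Q_\rho$ we have $\rho_\sigma(v_i) = v_i^\sigma$, and using the relations \eqref{relatie} together with the fact that the Galois action is additive and $\rho_\sigma$ is $E$-linear, we should obtain
\begin{equation*}
v_1^\sigma + \sigma(\lambda_2) v_2^\sigma + \cdots + \sigma(\lambda_r) v_r^\sigma = 0.
\end{equation*}

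Here one must be slightly careful: $\rho_\sigma(\sum \lambda_i v_i) = \sum \lambda_i \rho_\sigma(v_i) = \sum \lambda_i v_i^\sigma$, and then I want to compare this with $(\sum \mu_i v_i)^\sigma = \sum \mu_i^\sigma v_i^\sigma$; matching via the first identity in \eqref{relatie}, which reads $(\sigma(\lambda)x)^\sigma = \lambda x^\sigma$, forces $\mu_i = \sigma(\lambda_i)$. Applying this to the original relation (whose right-hand side $0$ is fixed by the action) gives exactly the displayed ``conjugated'' relation. Subtracting it from the original relation $v_1 + \sum_{i\ge 2}\lambda_i v_i = 0$ — wait, the two relations live over different ``twisted'' copies, so instead I subtract the conjugated relation evaluated back: more precisely, since $\sum \lambda_i v_i = 0$ implies $\sum \sigma(\lambda_i) v_i = 0$ for every $\sigma$ (this is the clean way to phrase it, bypassing the twisted vectors entirely — the set $V^\Q_\rho$ being a rational subspace means the coefficient tuples of relations form a $\Gal$-stable object). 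Then for any $\sigma$, subtracting $\sum \sigma(\lambda_i) v_i = 0$ from $\sum \lambda_i v_i = 0$ kills the $v_1$ term (coefficient $1 - \sigma(1) = 0$) and yields a shorter relation $\sum_{i=2}^r (\lambda_i - \sigma(\lambda_i)) v_i = 0$, contradicting minimality unless $\lambda_i = \sigma(\lambda_i)$ for all $i$ and all $\sigma$. By the Galois correspondence (here we use that $E/\Q$ is Galois), $\lambda_i \in \Q$ for all $i$, so the original relation was a nontrivial $\Q$-linear relation among $v_1, \ldots, v_k$, contradicting their $\Q$-independence.

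**The main obstacle.** The genuinely delicate point is establishing rigorously that a relation $\sum \lambda_i v_i = 0$ over $E$ implies $\sum \sigma(\lambda_i) v_i = 0$ for every $\sigma \in \Gal(E,\Q)$. This is where the defining property \eqref{def} of $V^\Q_\rho$ and the identities \eqref{relatie} must be combined correctly; the asymmetry between the left action $\rho_\sigma$ and the right action $x \mapsto x^\sigma$, and the appearance of $\sigma$ versus $\sigma^{-1}$, is exactly the kind of bookkeeping that is easy to get backwards. Once that implication is in hand, the minimal-counterexample/subtraction argument and the invocation of the Galois correspondence to conclude $\lambda_i \in \Q$ are both routine. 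An alternative and perhaps cleaner route avoiding the minimality argument: extend $\{v_1,\dots,v_k\}$ to a maximal $\Q$-independent subset of $V^\Q_\rho$, show this has size $\le m$ since such vectors must be $E$-independent (circular) — so I would stick with the minimal-relation argument above, which is self-contained.
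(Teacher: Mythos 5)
Your argument is correct and follows essentially the same route as the paper: take a minimal-length $E$-linear relation normalized so the first coefficient is $1$, apply $\rho_\sigma$ together with the defining property of $V^\Q_\rho$ and the identities in (\ref{relatie}) to deduce $\sum_i \sigma(\lambda_i) v_i = 0$, and conclude the coefficients are Galois-fixed, hence rational, contradicting $\Q$-independence. The only cosmetic difference is that you subtract the conjugated relation to get a shorter one, whereas the paper invokes uniqueness of the coefficients (which minimality gives directly); these are the same maneuver.
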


\begin{proof}
Assume that the lemma does not hold and take vectors $v_1, \ldots, v_k$ of $V^\Q_\rho$ with $k$ minimal which are linearly independent over $\Q$ but contradict the statement. This means that there exists $x_i \in E$ such that
\begin{align}\label{linind} \sum_{i=1}^k x_i v_i = 0.\end{align} From the minimality of $k$ it follows that $x_1 \neq 0$ and thus by multiplying this equation by $x_1^{-1}$ we can assume that $x_1 = 1$. Since $k$ is minimal, it follows that the $x_i$ are the unique elements of $E$ such that $x_1 = 1$ and equation (\ref{linind}) is true. 

\smallskip

If we apply the map $\rho_\sigma$ to the equation, we get that $$0 = \rho_\sigma\left(\sum_{i=1}^k x_i v_i \right) = \sum_{i=1}^k x_i \rho_\sigma(v_i) = \sum_{i=1}^k x_i v_i^\sigma = \left(\sum_{i=1}^k \sigma(x_i) v_i\right)^\sigma$$ because of (\ref{relatie}). We also have that $$\sum_{i=1}^k \sigma(x_i) v_i = 0.$$ Minimality of $k$ and the fact that $\sigma(x_1) = \sigma(1) = 1$ imply that $\sigma(x_i) = x_i$ for all $i$. Because this statement holds for all $\sigma \in \Gal(E,\Q)$, we conclude that the coefficients $x_i$ lie in $\Q$ and thus we get a contradiction since $v_i$ was a set of linearly independent vectors over $\Q$. 
\end{proof}

The lemma shows that the rational subspace $V^\Q_\rho$ is a rational form of $E^m$ if its dimension is maximal. This motivates the following definition:
\begin{Def}
A representation $\rho: \Gal(E,\Q) \to \GL_m(E)$ is called \emph{Galois compatible} (abbreviated as GC) if and only if $\dim_\Q (V^\Q_\rho) = m$. Equivalently, the representation $\rho
$ is GC if and only if $E \otimes V^\Q_\rho = E^m$ or if $V^\Q_\rho $ is a rational form of $E^m$.
\end{Def}

The trivial representation is the easiest example of a GC representation, since $V^\Q_\rho = \Q^m$. A simple computation shows that also the regular representation is GC:

\begin{Ex}
\label{reg}\emph{
Let $\rho$ be the regular representation of $\Gal(E,\Q)$, i.e.\ take the vector space over $E$ spanned by the basis $\{v_\sigma \mid \sigma \in \Gal(E,\Q)\}$ and the representation $\rho$ induced by the relations $\rho_\tau(v_\sigma) = v_{\tau\sigma}$ for all $\tau, \sigma \in \Gal(E,\Q)$. Every element $v$ of the rational vector space $V^\Q_\rho$ is given by $$v = \sum_{\sigma \in \Gal(E,\Q)} \sigma(x) v_\sigma$$ for some $x \in E$. This is a rational vector space of dimension $[E:\Q]$, which is also the order of the group $\Gal(E,\Q)$ and thus the dimension of the regular representation $\rho$. This shows that $\rho$ is indeed GC.}
\end{Ex}
In a similar way it is easy to check that every representation which is given by permutation matrices is GC. The goal of the remaining part of this section is to show that every rational representation is GC:
\begin{Prop}
\label{GC}
Let $E$ be a Galois extension of $\Q$. Every representation $\Gal(E,\Q) \to \GL_m(\Q)$ is Galois compatible.
\end{Prop}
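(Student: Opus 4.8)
The plan is to prove the two inequalities $\dim_\Q V^\Q_\rho \le m$ and $\dim_\Q V^\Q_\rho \ge m$ separately. The first is immediate from Lemma \ref{ratform}: any family in $V^\Q_\rho$ that is linearly independent over $\Q$ is linearly independent over $E$ inside $E^m$, and hence has at most $m$ elements. For the reverse inequality it suffices to show that $V^\Q_\rho$ spans $E^m$ over $E$: then some subset of $V^\Q_\rho$ is an $E$-basis of $E^m$, and being $E$-linearly independent it is a fortiori $\Q$-linearly independent, so $\dim_\Q V^\Q_\rho \ge m$, and the equivalent reformulations of Galois compatibility follow.

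To produce enough elements of $V^\Q_\rho$, I would introduce the averaging operator $T \colon E^m \to E^m$ defined by
$$T(v) = \sum_{\sigma \in \Gal(E,\Q)} \rho_{\sigma^{-1}}\bigl(v^\sigma\bigr),$$
which is $\Q$-linear (but not $E$-linear). The crucial point is that $T(v) \in V^\Q_\rho$ for every $v \in E^m$, and this is exactly where the hypothesis $\rho\bigl(\Gal(E,\Q)\bigr) \subseteq \GL_m(\Q)$ is used: a matrix with rational entries commutes with the coordinatewise Galois action, i.e.\ $\rho_\tau(v^\sigma) = \bigl(\rho_\tau(v)\bigr)^\sigma$ for all $\sigma,\tau$. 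Using this, together with the homomorphism property $\rho_\tau\rho_{\sigma^{-1}} = \rho_{\tau\sigma^{-1}}$, the relation $(v^\nu)^\tau = v^{\nu\tau}$ from (\ref{relatie}), and the substitution $\nu = \sigma\tau^{-1}$ in the defining sum, one checks that $\rho_\tau\bigl(T(v)\bigr) = \bigl(T(v)\bigr)^\tau$ for every $\tau \in \Gal(E,\Q)$. Conversely, if $v \in V^\Q_\rho$ then $\rho_{\sigma^{-1}}(v^\sigma) = \rho_{\sigma^{-1}}\bigl(\rho_\sigma(v)\bigr) = v$ for each $\sigma$, so $T$ restricts on $V^\Q_\rho$ to multiplication by the order of $\Gal(E,\Q)$; in particular $\mathrm{Im}(T) = V^\Q_\rho$.

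It then remains to show that $\mathrm{Im}(T)$ spans $E^m$ over $E$. Suppose it does not; then there is a nonzero $E$-linear functional $f$ on $E^m$ with $f \circ T = 0$. Writing each $E$-linear functional $f \circ \rho_{\sigma^{-1}}$ as a row vector $a_\sigma \in E^m$, the identity $0 = f\bigl(T(v)\bigr) = \sum_{\sigma}\sum_{j=1}^m (a_\sigma)_j\,\sigma^{-1}(v_j)$, valid for all $v = (v_1,\dots,v_m) \in E^m$, reduces (by letting $v$ vary in a single coordinate) to $\sum_\sigma (a_\sigma)_j\,\sigma^{-1}(x) = 0$ for all $x \in E$ and each fixed $j$. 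By the $E$-linear independence of the distinct field automorphisms $\{\sigma^{-1} \mid \sigma \in \Gal(E,\Q)\}$ (Dedekind's lemma), this forces $(a_\sigma)_j = 0$ for all $\sigma$ and $j$, i.e.\ $a_\sigma = 0$ for all $\sigma$; taking $\sigma$ to be the identity of $\Gal(E,\Q)$ gives $f = 0$, a contradiction. Hence $V^\Q_\rho$ spans $E^m$, so it is a rational form of $E^m$ and $\rho$ is GC.

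The point that needs care is the verification in the second paragraph that the averaging operator really lands in $V^\Q_\rho$; it is precisely there that the rationality of $\rho$ is essential, since for an arbitrary representation valued in $\GL_m(E)$ the operator $T$ need not preserve the defining relation, which is why Galois compatibility is a non-trivial condition in general. Everything after that is a routine linear-independence-of-characters argument. (Alternatively, the statement can be recognized as an instance of Galois descent for the semilinear $\Gal(E,\Q)$-action $\sigma \mapsto \bigl(v \mapsto \rho_{\sigma^{-1}}(v^\sigma)\bigr)$ on $E^m$, whose fixed set is by definition $V^\Q_\rho$; the direct argument above keeps the proof self-contained.)
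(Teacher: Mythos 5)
Your argument is correct, and it takes a genuinely different route from the paper. The paper deduces Proposition \ref{GC} from Lemma \ref{som}: using complete reducibility over $\Q$ it reduces to $\Q$-irreducible representations, notes these are subrepresentations of the regular representation, and invokes the explicit computation showing the regular representation is GC. You instead argue by Galois descent: the averaging operator $T(v)=\sum_{\sigma}\rho_{\sigma^{-1}}(v^\sigma)$ lands in $V^\Q_\rho$ (this is exactly where rationality of $\rho$ enters, since a matrix over $\Q$ commutes with the coordinatewise Galois action), acts as multiplication by $\lvert\Gal(E,\Q)\rvert$ on $V^\Q_\rho$, and its image spans $E^m$ over $E$ by Dedekind's independence of the automorphisms $\sigma^{-1}$; combined with Lemma \ref{ratform} for the upper bound $\dim_\Q V^\Q_\rho\le m$, this gives $\dim_\Q V^\Q_\rho=m$. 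All the steps check out (the reindexing $\nu=\sigma\tau^{-1}$, the identification $\mathrm{Im}(T)\subseteq V^\Q_\rho$, and the functional/character argument are fine, and taking $\sigma$ the identity indeed recovers $f$ itself). What each approach buys: yours is self-contained, avoids the representation-theoretic input (Maschke over $\Q$ and the embedding into the regular representation cited from Isaacs), and actually proves the stronger classical descent statement for the semilinear action $v\mapsto\rho_{\sigma^{-1}}(v^\sigma)$, of which the proposition is the fixed-point formulation; the paper's proof is shorter given that the regular representation was already checked to be GC by hand and that standard rational representation theory is assumed.
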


\noindent To prove this statement we will first show that it holds for irreducible representations and then apply the following lemma:

\begin{Lem}
\label{som}
Let $\rho_i: \Gal(E,\Q) \to \GL_{n_i}(E)$ with $i \in \{1,2\}$ be representations, then the following are equivalent:
\begin{enumerate}
\item $\rho_1$ and $\rho_2$ are Galois compatible.
\item $\rho_1 \oplus \rho_2: \Gal(E,\Q) \to \GL_{n_1+n_2}(E)$ is Galois compatible.
\end{enumerate}
\end{Lem}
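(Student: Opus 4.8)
The plan is to reduce the whole statement to a dimension count, using the fact that the Galois action on $E^m$ is defined component-wise. The key structural observation is that the defining relation for $V^\Q_\rho$ respects direct sums. Concretely, write $E^{n_1+n_2} = E^{n_1} \oplus E^{n_2}$ and a vector as $v = (v_1,v_2)$. Since $(\rho_1\oplus\rho_2)_\sigma(v) = (\rho_{1,\sigma}(v_1),\rho_{2,\sigma}(v_2))$ and, by the component-wise definition of the action, $v^\sigma = (v_1^\sigma,v_2^\sigma)$, the condition $(\rho_1\oplus\rho_2)_\sigma(v)=v^\sigma$ holds for all $\sigma$ if and only if $\rho_{1,\sigma}(v_1)=v_1^\sigma$ and $\rho_{2,\sigma}(v_2)=v_2^\sigma$ for all $\sigma$. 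Hence $V^\Q_{\rho_1\oplus\rho_2} = V^\Q_{\rho_1}\oplus V^\Q_{\rho_2}$ as rational subspaces of $E^{n_1+n_2}$, and therefore
\[
\dim_\Q V^\Q_{\rho_1\oplus\rho_2} = \dim_\Q V^\Q_{\rho_1} + \dim_\Q V^\Q_{\rho_2}.
\]

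Next I would invoke Lemma \ref{ratform}: a $\Q$-linearly independent family in $V^\Q_{\rho_i}$ is $E$-linearly independent in $E^{n_i}$, so $\dim_\Q V^\Q_{\rho_i}\le n_i$ for $i=1,2$, with equality precisely when $\rho_i$ is GC. The implication $(1)\Rightarrow(2)$ is then immediate: if both inequalities are equalities, the displayed sum gives $\dim_\Q V^\Q_{\rho_1\oplus\rho_2} = n_1+n_2$, i.e.\ $\rho_1\oplus\rho_2$ is GC. For $(2)\Rightarrow(1)$, suppose $\dim_\Q V^\Q_{\rho_1}+\dim_\Q V^\Q_{\rho_2} = n_1+n_2$ while each summand is bounded above by the corresponding $n_i$; a sum of two terms each at most $n_i$ can equal $n_1+n_2$ only if each term attains its bound, so both $\rho_1$ and $\rho_2$ are GC.

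I do not expect a genuine obstacle here; the only point requiring care is the very first one, namely that the Galois action on $E^{n_1+n_2}$ restricts to the Galois actions on the two summands, which is built into the component-wise definition in \eqref{relatie}. Everything else is the elementary remark that a sum of numbers, each capped by a fixed bound, meets the total bound iff every summand is at its own cap. If one wanted to avoid quoting Lemma \ref{ratform}, one could instead argue directly that a $\Q$-basis of $V^\Q_{\rho_1}$ together with a $\Q$-basis of $V^\Q_{\rho_2}$ forms a $\Q$-basis of $V^\Q_{\rho_1\oplus\rho_2}$, which immediately yields the additivity of dimensions; but using Lemma \ref{ratform} is the cleanest route and keeps the proof to a couple of lines.
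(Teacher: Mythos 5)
Your proof is correct and follows essentially the same route as the paper: both rest on the observation that $V^\Q_{\rho_1\oplus\rho_2} = V^\Q_{\rho_1}\oplus V^\Q_{\rho_2}$ and then conclude via Lemma \ref{ratform}. You merely make explicit the dimension count (each $\dim_\Q V^\Q_{\rho_i}\le n_i$ by Lemma \ref{ratform}, so the sum reaches $n_1+n_2$ iff both summands do) that the paper leaves as "easily follows".
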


\begin{proof}
Let $V_{\rho_1}^\Q$ and $V_{\rho_2}^\Q$ be the rational subspaces corresponding to $\rho_1$ and $\rho_2$, then by defition it follows that $$V_{\rho_1 \oplus \rho_2}^\Q = V_{\rho_1}^\Q \oplus V_{\rho_2}^\Q.$$ The statement of the lemma then easily follows by using Lemma \ref{ratform}.
\end{proof}
\noindent The proof of the proposition is now immediate by using the previous lemma:
\begin{proof}[Proof of Proposition \ref{GC}]
If two representation $\rho_1, \rho_2: \Gal(E,\Q) \to \GL_m(\Q)$ are $\Q$-equivalent, then $\rho_1$ is GC if and only if $\rho_2$ is GC. So from Lemma \ref{som} it follows that it is sufficient to prove the proposition for the $\Q$-irreducible representations of $\Gal(E,\Q)$. Since every $\Q$-irreducible representation is a subrepresentation of the regular representation (see e.g.\ \cite[Corollary 9.5.]{isaa76-1}), the statement follows from the example above.
\end{proof}
%

\subsection{Construction of automorphisms on rational forms of Lie algebras}
\label{cons}
In the previous subsection we constructed rational forms of vector spaces $E^m$ from representations of the Galois group $\Gal(E,\Q)$. In this subsection we use this technique to construct Lie algebras and automorphisms on them, allowing us to construct Anosov Lie algebras as well.

\smallskip

Let $\lien^\Q$ be a rational Lie algebra and $E$ a Galois extension of $\Q$. By taking a basis for $\lien^\Q \subseteq \lien^E$, the vector space $\lien^E$ is isomorphic to $E^m$ with $m = \dim_\Q(\lien^\Q)$ and thus the right action of $\Gal(E,\Q)$ is defined on $\lien^E$. It is an exercise to check that this action does not depend on the choice of basis for $\lien^\Q$. The right action of $\Gal(E,\Q)$ satisfies the property $$[X,Y]^\sigma = [ X^\sigma,Y^\sigma]$$ for all $\sigma \in \Gal(E,\Q)$, $X,Y \in \lien^E$. For every representation $$\rho: \Gal(E,\Q)\to \Aut(\lien^\Q) \le \GL(\lien^\Q),$$ with $\GL(\lien^\Q)$ the isomorphisms of $\lien^\Q$ as vector space, it follows from the definition that the rational subspace $V^\Q_\rho$ forms a subalgebra of $\lien^E$ and is thus a rational form of $\lien^E$ by Proposition \ref{GC}. We will denote this subspace by $\liem^\Q_\rho$ to emphasize the fact that it is a subalgebra.

\smallskip

We now answer the question when a given automorphism $f: \lien^E \to \lien^E$ induces an automorphism on $\liem^\Q_\rho$. Note that $\Gal(E,\Q)$ also acts on the right on $\Aut(\lien^E)$, by defining for all $f \in \Aut(\lien^E)$ the action as $$ f^\sigma(v) = \left( f(v^{\sigma^{-1}}) \right)^\sigma.$$ By fixing a basis for $\lien^\Q$ and thus also for $\lien^E$, the matrix representation of $f^\sigma$ is given by applying $\sigma^{-1}$ to every entry of the matrix representation of $f$. Also every representation $\rho: \Gal(E,\Q) \to \Aut(\lien^E)$ induces a left action on $\Aut(\lien^E)$ by conjugation. The automorphisms where this left action corresponds with the right action are exactly those that induce an automorphism on $\liem^\Q_\rho$:

\begin{Lem}
\label{auto}
Let $\lien^\Q$ be a rational Lie algebra and $\rho: G \to \Aut(\lien^\Q)$ a representation. An element $f \in \Aut(\lien^E)$ induces an automorphism on $\liem^\Q_\rho$ if and only if $f^\sigma = \rho_\sigma f \rho_{\sigma^{-1}}$ for all $\sigma \in \Gal(E,\Q)$.
\end{Lem}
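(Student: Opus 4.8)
The plan is to unwind both sides of the claimed equivalence into statements about $E$-linear endomorphisms of $\lien^E$ and then compare them on the rational form $\liem^\Q_\rho$, which spans $\lien^E$ over $E$ by Proposition \ref{GC} (note that $\rho$ takes values in $\Aut(\lien^\Q) \le \GL_m(\Q)$, so Proposition \ref{GC} applies).

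First I would record the elementary reduction. Since $\liem^\Q_\rho$ is a rational form, it is a $\Q$-subspace of $\lien^E$ of dimension $m = \dim_\Q \lien^\Q = \dim_E \lien^E$, and I claim that $f$ induces an automorphism on $\liem^\Q_\rho$ exactly when $f(\liem^\Q_\rho) \subseteq \liem^\Q_\rho$. Indeed, if this inclusion holds, then $f$ restricted to $\liem^\Q_\rho$ is an injective $\Q$-linear self-map of a finite-dimensional $\Q$-vector space, hence bijective, and it is automatically a Lie algebra homomorphism because $\liem^\Q_\rho$ is a subalgebra and $f$ is a Lie algebra automorphism of $\lien^E$. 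So it suffices to characterize the inclusion $f(\liem^\Q_\rho) \subseteq \liem^\Q_\rho$.

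Next, by the defining property (\ref{def}) of $\liem^\Q_\rho = V^\Q_\rho$, a vector $w \in \lien^E$ lies in $\liem^\Q_\rho$ if and only if $\rho_\sigma(w) = w^\sigma$ for every $\sigma \in \Gal(E,\Q)$. Hence $f(\liem^\Q_\rho) \subseteq \liem^\Q_\rho$ is equivalent to the condition: for all $v \in \liem^\Q_\rho$ and all $\sigma$, $\rho_\sigma(f(v)) = f(v)^\sigma$. The key step is to rewrite the right-hand side using the definition of the Galois action on $\Aut(\lien^E)$. From $f^\sigma(w) = (f(w^{\sigma^{-1}}))^\sigma$, substituting $w = v^\sigma$ and using that $\cdot^\sigma$ is a right action with $(v^\sigma)^{\sigma^{-1}} = v$, one obtains the identity $f(v)^\sigma = f^\sigma(v^\sigma)$; and since $v \in \liem^\Q_\rho$ gives $v^\sigma = \rho_\sigma(v)$, this becomes $f(v)^\sigma = f^\sigma(\rho_\sigma(v))$. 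Therefore the membership condition for $f(v)$ reads $\rho_\sigma(f(v)) = (f^\sigma \circ \rho_\sigma)(v)$, equivalently $f(v) = (\rho_{\sigma^{-1}} \circ f^\sigma \circ \rho_\sigma)(v)$, using $\rho_{\sigma^{-1}} = \rho_\sigma^{-1}$.

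Finally I would observe that this last equality asks the two $E$-linear maps $f$ and $\rho_{\sigma^{-1}} \circ f^\sigma \circ \rho_\sigma$ to agree on $\liem^\Q_\rho$; since $\liem^\Q_\rho$ contains an $E$-basis of $\lien^E$, agreement on $\liem^\Q_\rho$ is the same as agreement on all of $\lien^E$. Thus $f(\liem^\Q_\rho) \subseteq \liem^\Q_\rho$ holds if and only if $\rho_{\sigma^{-1}} \circ f^\sigma \circ \rho_\sigma = f$ for all $\sigma$, which rearranges to $f^\sigma = \rho_\sigma f \rho_{\sigma^{-1}}$, yielding both implications of the lemma simultaneously. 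I expect the only genuinely delicate point to be the bookkeeping with the right Galois action versus the left conjugation action — keeping careful track of $\sigma$ versus $\sigma^{-1}$ so that the final identity comes out precisely in the stated form; the rest is the "a rational form spans over $E$" principle together with a dimension count.
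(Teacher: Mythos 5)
Your proof is correct and follows essentially the same route as the paper's: the identity $(f(v))^\sigma = f^\sigma(v^\sigma) = f^\sigma(\rho_\sigma(v))$, the observation that invertibility of the restriction is automatic because $\liem^\Q_\rho$ is a rational form, and the step that an identity of $E$-linear maps need only be checked on a spanning set of $\liem^\Q_\rho$ are exactly the ingredients of the paper's two-directional argument. You merely package the two implications as a single chain of equivalences, which is a harmless reorganization.
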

\begin{proof}
First assume that $f$ satisfies the condition, i.e.\ that $f^\sigma = \rho_\sigma f \rho_{\sigma^{-1}}$ for $\sigma \in \Gal(E,\Q)$. For every $v \in \liem^\Q_\rho$ and $\sigma \in \Gal(E,\Q)$, we have that 
\begin{align*}
\big(f(v)\big)^\sigma = f^\sigma (v^\sigma) = \rho_\sigma f \rho_{\sigma^{-1}} (\rho_\sigma(v)) = \rho_\sigma(f(v))
\end{align*}
and thus $f(v) \in \liem^\Q_\rho$ because $\sigma$ was taken arbitrary. Since $\liem^\Q_\rho$ is a rational form, the restriction of $f$ to $\liem^\Q_\rho$ is invertible and $f$ induces an automorphism of $\liem^\Q_\rho$. 

For the other direction, fix $\sigma \in \Gal(E,\Q)$ and fix a basis $\{v_1, \ldots v_m\} \subseteq \liem^\Q_\rho$ for the Lie algebra $\lien^E$. Since $f$ induces an automorphism on $\liem^\Q_\rho$ we know that $f(v_i) \in \liem^\Q_\rho$ and thus that $\big(f(v_i)\big)^\sigma = \rho_\sigma(f(v_i))$. Since the vectors $\rho_{\sigma}(v_i)$ also form a basis for the Lie algebra $\lien^E$, it suffices to prove the relation $f^\sigma = \rho_\sigma f \rho_{\sigma^{-1}}$ on this basis. A computation shows that 
\begin{align*}
\rho_\sigma f \rho_{\sigma^{-1}} (\rho_\sigma(v_i)) = \rho_\sigma (f(v_i)) = \big(f(v_i)\big)^\sigma = f^\sigma(v_i^\sigma) = f^\sigma(\rho_\sigma(v_i)) 
\end{align*}
and thus the relation holds.
\end{proof}

The main theorem of this article is the combination of Proposition \ref{GC} and Lemma \ref{auto}: 
\begin{Thm}
\label{main}
Let $\lien^\Q$ be a rational Lie algebra and $\rho: \Gal(E,\Q) \to \Aut(\lien^\Q)$ a representation. Suppose there exists a Lie algebra automorphism $f: \lien^E \to \lien^E$ such that $\rho_\sigma f \rho_{\sigma^{-1}} = f^\sigma$ for all $\sigma \in \Gal(E,\Q)$. Then there also exists a rational form $\liem^\Q \subseteq \lien^E$ such that $f$ induces an automorphism of $\liem^\Q$. 

If all eigenvalues of $f$ are algebraic units of absolute value different from $1$, then $\liem^\Q$ is Anosov.
\end{Thm}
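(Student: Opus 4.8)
The plan is to combine the two previously established results in the obvious way. First I would invoke Proposition~\ref{GC}: since $\rho$ takes values in $\Aut(\lien^\Q) \le \GL_m(\Q)$, it is a rational representation of $\Gal(E,\Q)$, hence Galois compatible, so the rational subspace $V^\Q_\rho$ is a rational form of $\lien^E$. Because $\rho_\sigma$ is an \emph{algebra} automorphism for each $\sigma$ and the right $\Gal(E,\Q)$-action satisfies $[X,Y]^\sigma = [X^\sigma, Y^\sigma]$, the defining condition $\rho_\sigma(v) = v^\sigma$ is preserved under the bracket, so $V^\Q_\rho$ is in fact a Lie subalgebra; this is the rational form $\liem^\Q := \liem^\Q_\rho$. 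Next I would apply Lemma~\ref{auto} with $G = \Gal(E,\Q)$: the hypothesis $\rho_\sigma f \rho_{\sigma^{-1}} = f^\sigma$ is exactly the condition in that lemma, so $f$ restricts to an automorphism of $\liem^\Q$. This proves the first assertion.

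For the second assertion I would argue that the automorphism $f|_{\liem^\Q}$ is Anosov, i.e.\ hyperbolic and integer-like, purely from the spectral hypothesis on $f$. The key point is that the eigenvalues (with multiplicity) of $f$ acting on $\lien^E$ coincide with those of $f|_{\liem^\Q}$ acting on $\liem^\Q$: indeed, a $\Q$-basis of $\liem^\Q$ is an $E$-basis of $\lien^E$ (that is what ``rational form'' means), and with respect to such a basis the matrix of $f|_{\liem^\Q}$ has rational entries and its scalar extension to $E$ is the matrix of $f$. Hence $f$ and $f|_{\liem^\Q}$ have the same characteristic polynomial. Since the eigenvalues of $f$ all have absolute value $\neq 1$, $f|_{\liem^\Q}$ is hyperbolic. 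Since the eigenvalues are algebraic units, their product $\det f = \pm 1$ has absolute value $1$; and the characteristic polynomial of $f|_{\liem^\Q}$ is a polynomial with rational coefficients which, being (up to sign) the minimal-polynomial-type product over Galois conjugates, in fact has integer coefficients because each eigenvalue is an algebraic integer and the coefficients are symmetric functions of the eigenvalues lying in $\Q$ — hence in $\Z$. Therefore $f|_{\liem^\Q}$ is integer-like, so $\liem^\Q$ is an Anosov Lie algebra by definition.

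I do not expect any serious obstacle here; both halves are essentially bookkeeping on top of Proposition~\ref{GC} and Lemma~\ref{auto}. The one place that deserves a careful sentence is the claim that the characteristic polynomial of $f|_{\liem^\Q}$ has \emph{integer} (not merely rational) coefficients: this uses that the coefficients are, up to sign, elementary symmetric functions of the eigenvalues of $f$, that these eigenvalues are algebraic integers (being algebraic units), that the ring of algebraic integers is closed under the relevant operations, and that an algebraic integer lying in $\Q$ is an ordinary integer. The absolute-value-$1$ condition on the determinant is immediate from the units hypothesis. So the proof is a short assembly: apply Proposition~\ref{GC} to get the rational form, apply Lemma~\ref{auto} to get the induced automorphism, and then read off hyperbolicity and integer-likeness from the equality of characteristic polynomials together with the unit/absolute-value hypotheses on the spectrum of $f$.
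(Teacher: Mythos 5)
Your proposal is correct and follows essentially the same route as the paper: the first assertion is exactly the assembly of Proposition~\ref{GC} (plus the bracket-compatibility of the Galois action making $V^\Q_\rho$ a subalgebra) with Lemma~\ref{auto}, and the second assertion is proved by the same argument the paper gives, namely that the characteristic polynomial of $f|_{\liem^\Q}$ has coefficients that are algebraic integers lying in $\Q$, hence integers, while the determinant is a rational algebraic unit, hence $\pm 1$.
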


\begin{proof}
The only statement left to show is the last one. We claim that if $f$ is an automorphism of a rational Lie algebra with only algebraic units as eigenvalues, then $f$ is integer-like. Note that the coefficients of the characteristic polynomial of $f$ are formed by taking sums and products of the eigenvalues and thus all these coefficients are algebraic integers. Since these coefficients also lie in $\Q$, they are integers. The determinant of $f$ is equal to the product of all eigenvalues and therefore is an algebraic unit. Since the only algebraic units in $\Q$ are $\pm 1$, the claim now follows and this ends the proof.
\end{proof}

Note that the type of the Lie algebra $\liem^\Q$ of the theorem is equal to the type of $\lien^\Q$ and is thus completely determined. The signature of the automorphism $f$ also does not change by restricting it to a rational form. But in general the Lie algebras $\lien^\Q$ and $\liem^\Q$ will not be isomorphic, so this theorem does not allow us to show that a specific Lie algebra is Anosov. In Section \ref{verb} we determine the isomorphism class of the Lie algebra in a low-dimensional case by using the classification of low-dimensional Anosov algebras given in \cite{lw09-1}.

\smallskip

In the special case where the Lie algebra $\lien^\Q$ is given by the eigenspaces of an automorphism, the theorem becomes:

\begin{Cor}
\label{main2}
Let $\lien^\Q$ be a rational Lie algebra and assume there exists a decomposition of $\lien^\Q$ into subspaces $$\lien^\Q = \bigoplus_{\lambda \in E} V_\lambda$$ such that $[V_\lambda,V_\mu] \subseteq V_{\lambda \mu}$. Let $\rho:\Gal(E,\Q) \to \Aut(\lien^\Q)$ be a representation such that $\rho_\sigma(V_\lambda)= V_{\sigma(\lambda)}$ for all $\sigma \in \Gal(E,\Q)$. Then the linear map $f: \lien^E \to \lien^E$ given by $f(X) = \lambda X$ for all $X \in V_\lambda$ induces an automorphism on some rational form $\liem^\Q \subseteq \lien^E$. 

If every $\lambda$ is an algebraic unit of absolute value different from $1$, then $\liem^\Q$ is Anosov.
\end{Cor}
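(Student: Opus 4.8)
The plan is to deduce this directly from Theorem~\ref{main}: the map $f$ is visibly the candidate automorphism, so the whole content is to check the compatibility hypothesis $\rho_\sigma f \rho_{\sigma^{-1}} = f^\sigma$ and then read off the Anosov conclusion from the last sentence of that theorem.

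First I would check that $f$ is well defined and lies in $\Aut(\lien^E)$. Since $\lien^\Q=\bigoplus_\lambda V_\lambda$ is a direct sum of $\Q$-subspaces, extension of scalars gives $\lien^E=\bigoplus_\lambda V_\lambda^E$, so the prescription $f(X)=\lambda X$ on each $V_\lambda^E$ defines an $E$-linear endomorphism; it respects brackets because $f([X,Y])=\lambda\mu[X,Y]=[\lambda X,\mu Y]=[f(X),f(Y)]$ for $X\in V_\lambda$, $Y\in V_\mu$, using $[V_\lambda,V_\mu]\subseteq V_{\lambda\mu}$, and then extending bilinearly; and it is invertible as soon as $0$ does not occur among the indices, which we may assume (and which is automatic under the hypothesis of the second statement).

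The main step is the identity $\rho_\sigma f\rho_{\sigma^{-1}}=f^\sigma$ for each $\sigma\in\Gal(E,\Q)$. Both sides are $E$-linear endomorphisms of $\lien^E$ (for $f^\sigma$ this follows from the relations~(\ref{relatie})), so it suffices to compare them on a $\Q$-basis of $\lien^\Q$ adapted to the $V_\lambda$'s. Take $X\in V_\lambda\subseteq\lien^\Q$; since $X$ has rational coordinates, $X^\tau=X$ for every $\tau$, whence $f^\sigma(X)=\big(f(X^{\sigma^{-1}})\big)^\sigma=(\lambda X)^\sigma=\sigma^{-1}(\lambda)\,X$ by~(\ref{relatie}). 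On the other side, $\rho_{\sigma^{-1}}(X)\in\rho_{\sigma^{-1}}(V_\lambda)=V_{\sigma^{-1}(\lambda)}$, so $f(\rho_{\sigma^{-1}}(X))=\sigma^{-1}(\lambda)\,\rho_{\sigma^{-1}}(X)$, and applying $\rho_\sigma$ together with $\rho_\sigma\rho_{\sigma^{-1}}=\mathrm{id}$ gives $\rho_\sigma f\rho_{\sigma^{-1}}(X)=\sigma^{-1}(\lambda)\,X$. The two expressions agree, so the hypothesis of Theorem~\ref{main} is satisfied and that theorem produces a rational form $\liem^\Q\subseteq\lien^E$ on which $f$ restricts to an automorphism.

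For the last assertion, the eigenvalues of $f$ are exactly the indices $\lambda$ with $V_\lambda\neq0$; hence if each such $\lambda$ is an algebraic unit of absolute value different from $1$, then all eigenvalues of $f$ are algebraic units of absolute value $\neq 1$, and the final sentence of Theorem~\ref{main} yields that $\liem^\Q$ is Anosov. I do not expect a genuine obstacle here: the only point requiring care is the bookkeeping of inverses in the right actions of $\Gal(E,\Q)$ on $E$, on $\lien^E$, and on $\Aut(\lien^E)$, so that the direction in $\rho_\sigma(V_\lambda)=V_{\sigma(\lambda)}$ correctly matches the direction built into the definition of $f^\sigma$.
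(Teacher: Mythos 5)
Your proposal is correct and follows essentially the same route as the paper: it verifies the compatibility relation $\rho_\sigma f\rho_{\sigma^{-1}}=f^\sigma$ on vectors $X\in V_\lambda$ exactly as in the paper's proof and then invokes Theorem~\ref{main} for both the rational form and the Anosov conclusion. The extra remarks (bracket check for $f$, excluding the index $0$, $E$-linearity of both sides) are harmless elaborations of points the paper leaves implicit.
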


\begin{proof}
We will make use of Theorem \ref{main} to prove this. The condition on the Lie bracket implies that $f$ is a Lie algebra automorphism and the last condition is identical to the last condition of Theorem \ref{main}. So it is left to show that $$\rho_\sigma f \rho_{\sigma^{-1}} = f^\sigma$$ for all $\sigma \in \Gal(E,\Q)$. It suffices to prove this relation for vectors $X \in V_\lambda$ for all possible $\lambda$. For such a vector, we have that
\begin{align*}
\rho_\sigma f \rho_{\sigma^{-1}} \left( X \right) = \rho_\sigma \Big(f \big( \rho_{\sigma^{-1}}\left( X \right) \big) \Big)= \sigma^{-1}(\lambda) \rho_\sigma \big( \rho_{\sigma^{-1}} \left( X \right) \big) = \sigma^{-1}\left( \lambda \right) X
\end{align*}
since $\rho_{\sigma^{-1}}(X) \in V_{\sigma^{-1}(\lambda)}$ and
\begin{align*}
f^\sigma \left(X \right) =  \left( f(X) \right)^\sigma =  \left( \lambda X \right)^\sigma = \sigma^{-1}(\lambda)  X
\end{align*}
since $X^{\sigma^{-1}} = X$ and thus equality holds.
\end{proof}

\begin{Rmk}
\label{oned}
Corollary \ref{main2} is the version of the main theorem we will use most of the times. In many examples, the spaces $V_\lambda$ will be one-dimensional and thus given by basis vectors $X_\lambda$ for $\lambda \in E$. In these examples, the Lie bracket is of the form $$[X_\lambda,X_\mu] = \pm X_{\lambda\mu}$$ and the representation $\rho$ is given by $\rho_\sigma(X_\lambda) = \pm X_{\sigma(\lambda)}$ for all $\lambda \in E, \sigma \in \Gal(E,\Q)$. To use the corollary, the only thing left to check is that $\rho$ is indeed a representation, since all other conditions are straightforward.
\end{Rmk}

\section{Consequences of Theorem \ref{main}}
\label{conseq}

The main application of the theorem and corollary above lies in constructing Anosov Lie algebras of specific types and with Anosov automorphisms of specific signatures. We present here three different consequences of the main theorem, going from simplifying and correcting existing results to constructing new examples of minimal signature and minimal type.

\subsection{Simplification of existing results}

By using our main theorem, all the examples of Anosov Lie algebras in \cite{laur03-1,lw08-1,lw09-1,mw07-1} are now straightforward to construct. For instance, the examples in \cite{lw09-1,mw07-1} follow from Remark \ref{oned} since they start from a basis for the one-dimensional eigenspaces of a hyperbolic automorphism. As another example of the simplification we demonstrate how \cite[Theorem 3.2.]{lw08-1} follows after a few lines.

\smallskip

Recall that a Lie algebra $\lien^E$ over a field $E$ is called graded if there exist subspaces $\lien_i^E \subseteq \lien^E$ for $i \in \{1, \ldots, k\}$ such that 
\begin{align*}
&\lien^E = \lien_1^E \oplus \lien_2^E \oplus \ldots \oplus \lien_k^E \hspace{1mm} \text{  and} \\
&[\lien_i^E, \lien_j^E] \subseteq \lien_{i+j}^E.
\end{align*}
If $\lien^E$ is graded, then there exists an automorphism $f_\lambda: \lien^E \to \lien^E$ for every $\lambda \in E, \lambda \neq 0$, which is defined by $$f_\lambda (X) = \lambda^i X  \hspace{2mm} \forall X \in \lien_i.$$
The proof of \cite[Theorem 3.2.]{lw08-1} now follows immediately from the main theorem:

\begin{Thm}
\label{laur}
Let $\lien^\Q$ be a graded Lie algebra and consider the direct sum $$\tilde{\lien}^\Q= \underbrace{\lien^\Q \oplus \ldots \oplus \lien^\Q}_{m\ {\rm times}}$$ with $m \geq 2$. Then there exists a rational form of $\tilde{\lien}^\R$ which is Anosov.
\end{Thm}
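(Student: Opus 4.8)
The plan is to deduce the theorem directly from Corollary~\ref{main2}. The first step is to fix a \emph{totally real} Galois extension $E/\Q$ of degree $m$ together with a unit $\lambda\in\mathcal{O}_E^\times$ of infinite order. Such a field exists for every $m\geq2$: picking a prime $p\equiv1\pmod{2m}$ (Dirichlet's theorem on arithmetic progressions), the unique subgroup of index $m$ of the cyclic group $\Gal(\Q(\zeta_p)/\Q)$ has even order $(p-1)/m$ and hence contains the order-two element acting as complex conjugation, so its fixed field $E$ lies in $\R$ and is a cyclic (in particular Galois) extension of $\Q$ of degree $m$; and since the unit rank of $E$ equals $m-1\geq1$ by Dirichlet's unit theorem, a non-torsion unit $\lambda$ exists. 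Because $E$ is totally real, a non-torsion unit is the same thing as a unit $\neq\pm1$.

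Next I would feed the following data into Corollary~\ref{main2}. Index the $m$ summands of $\tilde\lien^\Q=\bigoplus_{\tau\in\Gal(E,\Q)}\lien^\Q_{(\tau)}$ by the elements of $\Gal(E,\Q)$, and let $\rho_\sigma\in\Aut(\tilde\lien^\Q)$ permute them by carrying $\lien^\Q_{(\tau)}$ onto $\lien^\Q_{(\sigma\tau)}$ via the identity of $\lien^\Q$; since permuting the summands of a direct sum of Lie algebras is a Lie algebra automorphism, $\rho$ is a representation. Using the grading $\lien^\Q=\bigoplus_i\lien^\Q_i$, attach to the $i$-th graded piece inside the $\tau$-th copy the scalar $\tau(\lambda)^i\in E$, and for $\mu\in E$ let $V_\mu\subseteq\tilde\lien^\Q$ be the rational span of all graded pieces with attached scalar $\mu$. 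It is routine to check that $\tilde\lien^\Q=\bigoplus_\mu V_\mu$, that $[V_\mu,V_\nu]\subseteq V_{\mu\nu}$ (brackets between distinct copies vanish, and within a copy $[\lien^\Q_i,\lien^\Q_j]\subseteq\lien^\Q_{i+j}$), and that $\rho_\sigma(V_\mu)=V_{\sigma(\mu)}$ (shifting $\tau\mapsto\sigma\tau$ changes the attached scalar by $\tau(\lambda)^i\mapsto(\sigma\tau)(\lambda)^i=\sigma(\tau(\lambda)^i)$). Corollary~\ref{main2} then hands us a rational form $\liem^\Q\subseteq\tilde\lien^E$ on which the map $f$ acting as $\mu$ on each $V_\mu$ — equivalently, acting as $f_{\tau(\lambda)}$ on the $\tau$-th copy — restricts to an automorphism.

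It remains to verify the two assertions of the conclusion. For the Anosov property, note that the eigenvalues of $f$ are precisely the scalars $\tau(\lambda)^i$ with $\tau\in\Gal(E,\Q)$ and $i\geq1$ ranging over the grading; each is a power of a Galois conjugate of the unit $\lambda$, hence an algebraic unit, and since $E$ is totally real the conjugate $\tau(\lambda)$ is a real unit different from $\pm1$, so $|\tau(\lambda)^i|\neq1$. By the final sentence of Corollary~\ref{main2}, $\liem^\Q$ is Anosov. To see that $\liem^\Q$ is a rational form of $\tilde\lien^\R$ and not merely of $\tilde\lien^\C$, fix a real embedding $E\hookrightarrow\R$; from the rational-form identity $E\otimes_\Q\liem^\Q\cong\tilde\lien^E=E\otimes_\Q\tilde\lien^\Q$, applying $\R\otimes_E(-)$ yields $\R\otimes_\Q\liem^\Q\cong\tilde\lien^\R$, as required.

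The only genuinely external ingredient — and the step I expect to be the main obstacle — is the number-theoretic one: producing, for each $m\geq2$, a totally real Galois extension of $\Q$ of degree $m$ carrying a non-torsion unit; everything after that is a formal consequence of the main theorem. I would stress that the total reality of $E$ is exactly what makes the hyperbolicity automatic, since in a totally real field the only units of absolute value one are $\pm1$, so any non-torsion unit has all of its Galois conjugates off the unit circle.
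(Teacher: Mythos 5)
Your proof is correct and follows essentially the same route as the paper: the same permutation representation of $\Gal(E,\Q)$ on the $m$ copies indexed by the Galois group, the same labelling of the $i$-th graded piece of the $\tau$-th copy by $\tau(\lambda)^i$, and the same appeal to Corollary \ref{main2}. The only divergence is that the paper takes $\lambda$ to be a unit Pisot number (supplied by its Appendix), whereas you observe, correctly, that any unit $\neq \pm 1$ of the real Galois extension already suffices, since all of its conjugates are real units $\neq \pm 1$ and hence have absolute value different from $1$.
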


\begin{proof}
Let $E$ be any real Galois extension of $\Q$ of degree $m$ with $\Gal(E,\Q) = \{ \sigma_1, \ldots, \sigma_m\}$. Every $\sigma \in \Gal(E,\Q)$ induces a permutation $\pi \in S_m$ via $$\sigma \sigma_i = \sigma_{\pi(i)}  \hspace{1mm}  \text{ for all } \hspace{1mm} i \in \{1, \ldots, m\},$$ and thus also an automorphism $\rho_\sigma \in \Aut(\tilde{\lien}^\Q)$ by permuting the components of $\tilde{\lien}^\Q$ according to $\pi$. Note that $\rho$ is a representation $\Gal(E,\Q) \to \Aut(\tilde{\lien}^\Q)$. 

\smallskip

Take an unit Pisot number $\lambda$ in $E$, which is possible since $m \geq 2$ (see Appendix for more details) and take the grading $ \lien^\Q = \lien_1^\Q \oplus \dots \oplus \lien_k^\Q$ for $\lien^\Q$. By writing the subspace $\lien^\Q_i$ of the $j$-th component of $\tilde{\lien}^\Q$ as $V_{\sigma_i(\lambda^j)}$, it is immediate from the construction of $\rho$ that all conditions of Corollary \ref{main2} are satisfied. Since every $\sigma_i(\lambda^j)$ is an algebraic unit of absolute value different from $1$, it follows that $\tilde{\lien}^E$ (and therefore also $\tilde{\lien}^\R$) has a rational form which is Anosov.
\end{proof}

\subsection{Correction to a result of \cite{lw09-1}}

\label{verb}

In \cite{lw09-1}, a classification of all Anosov Lie algebras up to dimension $8$ is given. As one of the consequences, as we recalled in (False) Claim \ref{corlaur}, it is stated in \cite[Corollary 4.3.]{lw09-1} that every Anosov diffeomorphism on a nilmanifold of dimension $\leq 8$ which is not a torus, has signature $\{3,3\}$ or $\{4,4\}$. There is not really a proof of this statement given in \cite{lw09-1} and in fact, by using Theorem \ref{main} we can give new examples which were overlooked by the author.

\smallskip

First we recall some notions of \cite{lw09-1} about the Pfaffian form of a Lie algebra. Let $\lien^E$ be a Lie algebra over any field $E$ and take $\langle \hspace{1mm},\hspace{1mm} \rangle$ an inner product on $\lien^E$, i.e.\ a non-degenerate symmetric bilinear form. For every $Z \in \lien^E$, there exists a linear map $J_Z: \lien^E \to \lien^E$ defined by $$\langle J_Z X, Y \rangle = \langle [X,Y], Z \rangle \hspace{3mm} \forall X,Y \in \lien^E.$$ Note that $J_Z$ is skew-symmetric with respect to the inner product, meaning that $$\langle J_Z X, Y \rangle = - \langle X, J_Z Y \rangle$$ for all $X,Y \in \lien^E$. It is easy to check that an isomorphism $\alpha: \lien^E \to \lien^E$ is an automorphism of $\lien^E$ if and only if \begin{align}
\label{aut} \alpha^T J_Z \alpha = J_{\alpha^T (Z)}
\end{align}
 for all $Z \in \lien^E$, where $\alpha^T: \lien^E \to \lien^E$ is the adjoint map of $\alpha$, defined by $\langle X, \alpha^T( Y) \rangle = \langle \alpha (X), Y \rangle$.

Now assume that $\lien^E$ is a $2$-step nilpotent Lie algebra of type $(2 m,k)$ and take $V^E \subseteq \lien^E$ such that $V^E \oplus \gamma_2(\lien^E) = \lien^E$ as a vector space. Take an inner product satisfying $\langle V^E , \gamma_2(\lien^E)\rangle = 0$ and such that there exists an orthonormal basis for $V^E$ with respect to the inner product. 
Denote the vector space (in fact it is a Lie algebra) of all skew-symmetric endomorphisms of $V^E$ by $\lies(V^E)$. The construction above then induces a linear map \begin{align*}J : \gamma_2(\lien^E) &\to \lies(V^E)\\ Z &\mapsto J_Z |_{V^E}.\end{align*}
After taking an orthonormal basis for $V^E$, we can identify $\lies(V^E)$ with the skew-symmetric matrices. Recall that the Pfaffian on $V^E$ is the unique polynomial function $$\Pf: \lies(V^E) \to E$$ such that $\Pf(A)^2 = \det(A)$ for all $A \in \lies(V^E)$ and $\Pf(S) = 1$ for some fixed $S \in \lies(V^E)$ with $\det(S) = 1$ (where this last condition is needed to fix the sign). The Pfaffian obviously satisfies the relation $\Pf(A^T B A) = \det(A) \Pf(B)$ for all endomorphisms $A: V^E \to V^E$. The composition $$h = \Pf \circ J: \gamma_2(\lien^E) \to E$$ is called the Pfaffian form of the 2-step nilpotent Lie algebra $\lien^E$. By taking another vector space $V^E$ or changing the inner product on $\lien^E$ or the basis of $V^E$, the Pfaffian form changes to a polynomial \begin{align*}\gamma_2(\lien^E) &\to E \\ Z &\mapsto e h(\beta(Z))\end{align*} for some $e \in E \setminus \{ 0 \}$ and some isomorphism $\beta: \gamma_2(\lien^E) \to \gamma_2(\lien^E)$. Thus the Pfaffian form is uniquely determined by $\lien^E$ up to projective equivalence (see \cite[Proposition 2.4.]{laur08-1} for the exact definition and proof of this statement). The polynomial $h$ is homogeneous of degree $m$ in $k$ variables and with coefficients in $E$.  An automorphism of the Pfaffian form is an isomorphism $\beta: \gamma_2(\lien^E) \to \gamma_2(\lien^E)$ such that $h\circ \beta =h$. 

A binary quadratic form over $\Q$ is a homogeneous polynomial of degree 2 in 2 variables, i.e.\ polynomials that can be written as $$h(X,Y) = a X^2 + b XY + c Y^2$$ with $a,b,c \in \Q$. The Pfaffian form of a rational Lie algebra of type $(4,2)$ is such a polynomial. The discriminant $\Delta(h)$ of $h$ is defined as $$\Delta(h) = b^2 - 4 ac.$$ In \cite{gss82-1} it is shown that two rational Lie algebras of type $(4,2)$ with Pfaffian forms $h_1$ and $h_2$ are isomorphic if and only if there exists $q \in \Q \setminus \{ 0 \}$ such that $\Delta(h_1) = q^2 \Delta(h_2)$. Given any $k \in \Z$, we can consider the Lie algebra $\lien^\Q_k$ given by the basis $X_1, X_2, X_3, X_4, Z_1, Z_2$ and relations \begin{equation*}
\begin{aligned}[c]
[X_1,X_3] &= Z_1  \\ 
[X_2,X_3] &= k Z_2 
\end{aligned}
\hspace{2cm}
\begin{aligned}[c]
 [X_1,X_4] &= Z_2 \\ 
[X_2,X_4] &= Z_1.
\end{aligned}
\end{equation*}
The Lie algebra $\lien^\Q_k$ has Pfaffian form equal to $h(X,Y) = X^2 - k Y^2$ with discriminant $4k$. So every rational Lie algebra of type $(4,2)$ is isomorphic to $\lien^\Q_k$ for some $k \in \Z$ and $\lien^\Q_k$ is isomorphic to $\lien^\Q_{k^\prime}$ if and only if there exists a natural number $q > 0$ such that $k = q^2 k^\prime$.

The automorphisms $\beta \in \SL(2,\Z)$ of a binary quadratic form $h(X,Y) = a X^2 + b XY + c Y^2$ are completely known and described e.g.\ in \cite[Theorem 2.5.5.]{bv07-1}. Given a solution $x,y \in \Z$ of the Pell equation $$x^2 - \Delta(h) y^2 = 4,$$ the matrix $$U(x,y) = \begin{pmatrix} \frac{ x - yb}{2} & -cy \\ ay & \frac{x + yb}{2} \end{pmatrix} \in \SL(2,\Z)$$ is an automorphism of the quadratic form $h$. The map $U$ is a bijection between the solutions of the Pell equation and the automorphisms of $h$ which lie in $\SL(2,\Z)$. The eigenvalues of $U(x,y)$ are equal to $\frac{x \pm \sqrt{\Delta(h)} y}{2}$, so the field in which these eigenvalues lie gives us information about the discriminant of the form $h$.

Let $\lien^\Q$ be a rational Lie algebra of type $(4,2)$ and $\alpha \in \Aut(\lien^\Q)$ an Anosov automorphism. By squaring $\alpha$ if necessary, we can also assume that $\det(\alpha) = 1$. From the equation $\alpha^T J_Z \alpha = J_{\alpha^T (Z)}$ (see (\ref{aut})) and by applying the Pfaffian, we get that $h(\alpha^T (Z)) = h(Z)$ for all $Z \in V^\Q$. So $\alpha$ induces a hyperbolic and integer-like automorphism $\beta = \alpha^T |_{\gamma_2(\lien^\Q)}$ of the Pfaffian form $h$. The eigenvalues of $\beta$ (and thus also of $\alpha |_{\gamma_2(\lien^\Q)}$) lie in the field $\Q(\sqrt{\Delta(h)})$ and thus if we know these eigenvalues, we can determine the discriminant of the Pfaffian form of $\lien^\Q$ up to a square and therefore also the isomorphism class of $\lien^\Q$. This also implies that every Anosov Lie algebra of type $(4,2)$ is isomorphic to a Lie algebra $\lien^\Q_k$ with $k$ a square free natural number $>1$ (since all other values of $k$ imply that the eigenvalues have absolute value $1$).

\smallskip

We now have all the tools to construct new Anosov automorphisms on the Anosov Lie algebras $\lien^\Q_k$:

\begin{Prop}
\label{count}
Let $k$ be a natural number with $k > 1$ and $k$ square free. Let $\lien^\Q_k$ be the Lie algebra with basis $X_1, X_2, X_3, X_4, Z_1, Z_2$ and relations 
\begin{equation*}
\begin{aligned}[c]
[X_1,X_3] &= Z_1  \\ 
[X_2,X_3] &= k Z_2 
\end{aligned}
\hspace{2cm}
\begin{aligned}[c]
 [X_1,X_4] &= Z_2 \\ 
[X_2,X_4] &= Z_1.
\end{aligned}
\end{equation*}
Then there exists an Anosov automorphism $f$ on $\lien^\Q_k$ with $\sg(f) = \{2,4\}$.
\end{Prop}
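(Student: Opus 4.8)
The plan is to invoke Corollary~\ref{main2} (in the concrete form of Remark~\ref{oned}) for a carefully chosen eigenbasis over a degree‑$4$ field $E$, and then to identify the resulting rational form with $\lien_k^\Q$ by means of the Pfaffian‑form discussion above. For the model I would take $\lien^\Q = \mathfrak{h}_3 \oplus \mathfrak{h}_3$, the direct sum of two copies of the three‑dimensional Heisenberg algebra: on a basis $Y_1,Y_2,Y_3,Y_4,W_1,W_2$ the only nonzero brackets are $[Y_1,Y_2] = W_1$ and $[Y_3,Y_4] = W_2$, and this algebra has type $(4,2)$. As field I would fix a real biquadratic extension $E = \Q(\sqrt k,\sqrt m)$ for any square‑free $m > 1$ with $m \neq k$, with $\Gal(E,\Q) = \{1,\sigma,\tau,\sigma\tau\}$, where $\tau$ fixes $\Q(\sqrt k)$ pointwise and $\sigma$ restricts to the nontrivial automorphism of $\Q(\sqrt k)$.

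Next I would use Dirichlet's unit theorem to choose a unit $\mu \in \mathcal{O}_E^{\times}$ whose four conjugates satisfy $|\mu|,|\tau(\mu)|,|\sigma(\mu)| > 1$ and $|\sigma\tau(\mu)| < 1$ (absolute values in the natural real embedding). This is possible because the logarithmic embedding of $\mathcal{O}_E^{\times}$ is a full lattice in the trace‑zero hyperplane of $\R^4$, the open cone cut out by these inequalities is nonempty (it contains the ray through $(1,1,1,-3)$), and every nonempty open cone meets a full lattice; one may moreover avoid the finitely many hyperplanes on which two of the six numbers $\mu_1 = \mu$, $\mu_2 = \tau(\mu)$, $\mu_3 = \sigma(\mu)$, $\mu_4 = \sigma\tau(\mu)$, $\nu_1 = \mu_1\mu_2$, $\nu_2 = \mu_3\mu_4$ would coincide. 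Then these six numbers are pairwise distinct algebraic units of absolute value $\neq 1$, one has $\nu_1 = N_{E/\Q(\sqrt k)}(\mu) \in \Q(\sqrt k)$ and $\nu_2 = \sigma(\nu_1)$, and $\nu_1 \notin \Q$ since a rational unit has absolute value $1$; thus $\nu_1$ generates $\Q(\sqrt k)$ over $\Q$. I would then decompose $\lien^\Q = \bigoplus_{\lambda \in E} V_\lambda$ by setting $V_{\mu_i} = \Q Y_i$, $V_{\nu_1} = \Q W_1$, $V_{\nu_2} = \Q W_2$ and $V_\lambda = 0$ otherwise; since $\mu_1\mu_2 = \nu_1$ and $\mu_3\mu_4 = \nu_2$ this satisfies $[V_\lambda, V_{\lambda'}] \subseteq V_{\lambda\lambda'}$. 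Finally I would define $\rho : \Gal(E,\Q) \to \Aut(\lien^\Q)$ by letting $\rho_\sigma$ interchange $Y_1 \leftrightarrow Y_3$, $Y_2 \leftrightarrow Y_4$, $W_1 \leftrightarrow W_2$ and letting $\rho_\tau$ interchange $Y_1 \leftrightarrow Y_2$, $Y_3 \leftrightarrow Y_4$ while sending $W_1 \mapsto -W_1$ and $W_2 \mapsto -W_2$. A direct check shows that $\rho_\sigma$ and $\rho_\tau$ are commuting involutive automorphisms of $\lien^\Q$ — the sign on the $W_j$ is precisely what makes $\rho_\tau$ respect $[Y_2,Y_1] = -W_1$ — so $\rho$ is a genuine representation, and $\rho_\delta(V_\lambda) = V_{\delta(\lambda)}$ for all $\delta \in \Gal(E,\Q)$, since $\sigma$ and $\tau$ permute $\{\mu_1,\dots,\mu_4\}$ and $\{\nu_1,\nu_2\}$ among themselves exactly as $\rho_\sigma,\rho_\tau$ permute the corresponding lines.

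Corollary~\ref{main2} applied to the scaling map $f$ given by $f(Y_i) = \mu_i Y_i$ and $f(W_j) = \nu_j W_j$ then produces a rational form $\liem^\Q \subseteq \lien^E$ on which $f$ induces an automorphism, and this automorphism is Anosov because every $\mu_i$ and $\nu_j$ is an algebraic unit of absolute value different from $1$. It remains to read off the invariants. The type of $\liem^\Q$ equals that of $\lien^\Q$, namely $(4,2)$; the signature of $f$ is unchanged on restricting to the rational form and equals $\{2,4\}$, because $\mu_1,\mu_2,\mu_3,\nu_1$ have absolute value $> 1$ whereas $\mu_4,\nu_2$ have absolute value $< 1$ (here $|\nu_1 \nu_2| = |N_{E/\Q}(\mu)| = 1$ forces $|\nu_2| = |\nu_1|^{-1} < 1$). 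Lastly, $f$ restricted to $\gamma_2(\liem^\Q)$ has eigenvalues $\nu_1,\nu_2$, which generate $\Q(\sqrt k)$; hence, by the discriminant computation above (applied to $f^2$, whose $\gamma_2$‑eigenvalues $\nu_1^2,\nu_2^2$ still generate $\Q(\sqrt k)$), the discriminant of the Pfaffian form of $\liem^\Q$ generates $\Q(\sqrt k)$, and therefore $\liem^\Q \cong \lien_k^\Q$ because $k$ is square‑free. Transporting $f$ through this isomorphism yields the desired Anosov automorphism of $\lien_k^\Q$ with signature $\{2,4\}$.

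\emph{The main obstacle.} Once the pair $(E,\mu)$ is in place the rest is mechanical, so the real difficulty is finding it: one must take $E$ strictly larger than $\Q(\sqrt k)$, since over $\Q(\sqrt k)$ itself the six eigenvalues would be units falling into conjugate pairs of reciprocal absolute value and would force the signature to be $\{3,3\}$. The quartic field has to be chosen so that the four ``linear'' eigenvalues split three‑to‑one in absolute value while the two products $\nu_1,\nu_2$ still lie in $\Q(\sqrt k)$, so that the Pfaffian form — and hence the isomorphism type — remains pinned to $\lien_k^\Q$. Showing that these two constraints are simultaneously satisfiable, i.e.\ that the relevant cone in the unit lattice of $E$ is nonempty and can be kept off the degenerate loci, is the crux, and it is exactly what the Dirichlet‑unit‑theorem step delivers.
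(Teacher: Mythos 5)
Your proposal is correct and takes essentially the same route as the paper's own proof: the same model (the direct sum of two $3$-dimensional Heisenberg algebras), the same biquadratic field $\Q(\sqrt{k},\sqrt{l})$ with the permutation-type representation of $\Gal(E,\Q)=\Z_2\oplus\Z_2$ (including the induced signs on the center), Corollary \ref{main2} to produce the rational form with its Anosov automorphism, and the Pfaffian-discriminant argument (the $\gamma_2$-eigenvalues lie in and generate $\Q(\sqrt{k})$) to identify the form with $\lien^\Q_k$. The only deviation is your choice, via Dirichlet's unit theorem, of a unit with conjugate moduli $(>1,>1,>1,<1)$ rather than the paper's unit Pisot number with pattern $(>1,<1,<1,<1)$; since these patterns are swapped by inverting the unit and the signature is an unordered set, this makes no difference.
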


\begin{proof}
Fix the $k$ of the theorem and let $l$ be a different natural number with $l$ square free and $l>1$. Take $E = \Q(\sqrt{k},\sqrt{l})$, then $E$ is Galois over $\Q$ with $\Gal(E,\Q) = \Z_2 \oplus \Z_2$. Let $\tau \in \Gal(E,\Q)$ be the unique element with $\tau(\sqrt{k}) = \sqrt{k}, \tau(\sqrt{l}) = -\sqrt{l}$ and take another $\sigma \in \Gal(E,\Q)$ such that $\Gal(E,\Q) = \{ 1, \sigma, \tau, \sigma \tau\}$. Take $\lambda_1 \in E$ an unit Pisot number as introduced in the Appendix. Write the Galois conjugates of $\lambda_1$ as $\lambda_1, \tau(\lambda_1) = \lambda_2, \sigma(\lambda_1) = \lambda_3, \sigma \tau(\lambda_1) = \lambda_4$ 

Consider the Lie algebra $\lien^\Q$ with basis $X_{\lambda_1},X_{\lambda_2},X_{\lambda_3}, X_{\lambda_4}, Y_{\lambda_1 \lambda_2}, Y_{\lambda_3 \lambda_4}$ and Lie bracket given by 
\begin{align*}
[X_{\lambda_1},X_{\lambda_2}] &= Y_{\lambda_1 \lambda_2} \\ 
[X_{\lambda_3}, X_{\lambda_4}] &= Y_{\lambda_3 \lambda_4}
\end{align*} 
and all other brackets zero (so $\lien^\Q$ is isomorphic to the direct sum of two copies of the Heisenberg algebra of dimension $3$). Each of these basis vectors spans a $1$-dimensional subspace indexed by the same algebraic unit, corresponding to the decomposition in Corollary \ref{main2}. Consider the representation $\rho: \Gal(E,\Q) \to \Aut(\lien^\Q)$ induced by $\rho_\sigma(X_\lambda) = X_{\sigma(\lambda)}$ and $\rho_\tau(X_\lambda) = X_{\tau(\lambda)}$. A small computation shows that this is indeed a representation. By using Corollary \ref{main2} we then get a rational form $\liem^\Q$ of $\lien^E$ with Anosov automorphism $f: \liem^\Q \to \liem^\Q$. 

Note that $f$ has only two eigenvalues $>1$, namely $\lambda_1$ and $\lambda_1\lambda_2$ and thus $\sg(f) = \{2,4\}$. Since $\tau(\lambda_1 \lambda_2) = \lambda_1 \lambda
_2$ and $\Q(\sqrt{k})$ is the unique subfield of $E$ fixed by $\tau$, the eigenvalue $\lambda_1 \lambda_2$ is in $\Q(\sqrt{k})$, showing that the Pfaffian form $h$ of $\liem^\Q$ satisfies $\Delta(h) = q^2 k$ for some $q \in \Q$. This shows that $\liem^\Q$ is isomorphic to $\lien^\Q_k$ and thus $\lien^\Q_k$ also has an Anosov automorphism of signature $\{2,4\}$.
\end{proof}

It is also possible to start the proof from a Galois extension $\Q \subseteq E$ with $\Gal(E,\Q) = \Z_4$ as we show in the example below. By computing a basis for $\liem^\Q$ we give an explicit example which was overlooked in (False) Claim \ref{corlaur}:
\begin{Ex} Start from the polynomial $$p(X) = X^4 -4X^3 - 4X^2+X+1,$$ which has $4$ distinct real roots, say $\lambda_1 > \lambda_2 > \lambda_3 > \lambda_4$. These roots satisfy $\lambda_1 > 1$ and $\vert \lambda_i \vert < 1$ for $i \in \{ 2, 3, 4 \}$, showing that $\lambda_1$ is an unit Pisot number. The Galois group of the field $E = \Q(\lambda_1)$ is isomorphic to $\Z_4$, which can be checked e.g.\ with GAP, see \cite{gap14-1}. A generator $\sigma \in \Gal(E,\Q)$ is given by $$\sigma(\lambda_1) = \lambda_2, \hspace{1mm} \sigma(\lambda_2) = \lambda_3, \hspace{1mm} \sigma(\lambda_3) = \lambda_4, \hspace{1mm} \sigma(\lambda_4) = \lambda_1.$$ Just as in the proof of Proposition \ref{count}, consider the Lie algebra $\lien^\Q$ with basis $$X_{\lambda_1},X_{\lambda_2},X_{\lambda_3}, X_{\lambda_4}, Y_{\lambda_1 \lambda_3}, Y_{\lambda_2 \lambda_4}$$ and Lie bracket given by 
\begin{align*}
[X_{\lambda_1},X_{\lambda_3}] &= Y_{\lambda_1 \lambda_3} \\ 
[X_{\lambda_2}, X_{\lambda_4}] &= Y_{\lambda_2 \lambda_4}
\end{align*} 
Consider the representation $\rho: \Gal(E,\Q) \to \Aut(\lien^\Q)$ induced by $\rho_\sigma(X_{\lambda_i}) = X_{\sigma(\lambda_i)}$. The main theorem guarantees us the existence of a rational form $\liem^\Q$ of $\lien^E$ which is Anosov, but we now compute this Lie algebra explicitly by giving a basis for $\liem^\Q$.

\smallskip

Consider the basis $U_1, U_2, U_3, U_4, V_1, V_2$ given by 
\begin{eqnarray*}
U_i  &=& \sum_{j=1}^4 \lambda_j ^{i-1} X_{\lambda_j } \\
V_i  &=& \left( \lambda_3^{i} - \lambda_1^{i} \right) Y_{\lambda_1 \lambda_3} + \left(\lambda_4^{i} - \lambda_2^{i}\right) Y_{\lambda_2 \lambda_4}.
\end{eqnarray*}
To simplify the computations, we will use the notations $\lambda_0 = \lambda_4$ and $\lambda_5=\lambda_1$. The basis vectors $U_i$ satisfy $$U_i^\sigma = \sum_{j=1}^4 \left(\lambda_j^{i-1}\right)^\sigma X_{\lambda_{j}} = \sum_{j=1}^4 \sigma^{-1}\left(\lambda_j^{i-1}\right) X_{\lambda_{j}} = \sum_{j=1}^4 \lambda_{j-1}^{i-1} X_{\lambda_{j}}$$ and $$ \rho_{\sigma}(U_i) = \sum_{j=1}^4 \lambda_j ^{i-1} \rho_\sigma\left(X_{\lambda_j }\right) = \sum_{j=1}^4 \lambda_j^{i-1} X_{\lambda_{j+1}} = \sum_{j=2}^5 \lambda_{j-1} ^{i-1} X_{\lambda_{j}}.$$ We conclude that $\rho_{\sigma}(U_i) = U_i^\sigma$ and similarly this equation also holds for the vectors $V_i$. This shows that the basis vectors $U_1, U_2, U_3, U_4, V_1, V_2$ satisfy the defining relation of the rational form given in equation (\ref{def}) of Section \ref{secGC} and thus they indeed span the rational form $\liem^\Q$ of Theorem \ref{main}. The induced Anosov automorphism on $\liem^\Q$ guaranteed by Theorem \ref{main} is given by the matrix
\begin{eqnarray*}
\begin{pmatrix}
0 & 0 & 0 & -1 & 0 & 0\\
1 & 0 & 0 & -1 & 0 & 0\\
0 & 1 & 0 & 4 & 0 & 0\\
0 & 0 & 1 & 4 & 0 & 0\\
0 & 0 & 0 & 0 & -\frac{1}{2} & -\frac{1}{2} \\[0.3em]
0 & 0 & 0 & 0 & -\frac{1}{2} & -\frac{5}{2}
\end{pmatrix}
\end{eqnarray*}
in the basis $\hspace{0.5 mm} U_1, U_2, U_3, U_4, V_1, V_2$. By using the matrix representation of this Anosov automorphism, one can compute the Lie bracket:
\begin{equation*}
\begin{aligned}[c]
\left[U_1, U_2 \right] &= V_1 \\ 
\left[U_2, U_3 \right] &= -\frac{1}{2} V_1 - \frac{1}{2} V_2 \\ 
\left[U_3, U_4 \right] &= \frac{1}{2} V_1 + \frac{3}{2} V_2
\end{aligned}
\hspace{2cm}
\begin{aligned}[c]
\left[U_1, U_3 \right] &= V_2 \\ 
\left[U_2, U_4 \right] &= -\frac{1}{2} V_1 - \frac{5}{2} V_2 \\
\left[U_1, U_4 \right] &= \frac{3}{2} V_1 + \frac{9}{2} V_2.
\end{aligned}
\end{equation*}
The discriminant of the Pfaffian form of $\liem^\Q$ is $\frac{5}{4}$, so $\liem^\Q$ is isomorphic to $\lien_5^\Q$. The characteristic polynomial of the Anosov automorphism restricted to $\gamma_2(\liem^\Q)$ is equal to $X^2+3X+1$ which has $\Q(\sqrt{5})$ as splitting field.
\end{Ex}

Of course, Proposition \ref{count} also gives us examples of Anosov automorphisms $f: \lien^\Q_k \oplus \Q^2 \to \lien^\Q_k \oplus \Q^2$ with $\sg(f) = \{3,5\}$. But by using the notion of Scheuneman duality (see \cite{sche67-1}) for $2$-step nilpotent Lie algebras, it is possible to give another class of Anosov automorphisms overlooked in (False) Claim \ref{corlaur}. First we recall some details about this method as described in \cite{laur08-1}.

\smallskip

Let $V^\Q$ be any vector space with an inner product and consider the standard inner product $B$ on $\lies(V^\Q)$, given by $$B(Z_1,Z_2) = \trace(Z_1^T Z_2) = - \trace(Z_1 Z_2).$$ For every subspace $W^\Q$ of $\lies(V^\Q)$, there exists a rational Lie algebra $\lien^\Q = V^\Q \oplus W^\Q$ with Lie bracket $[\hspace{1mm},\hspace{1mm}]: V^\Q \times V^\Q \to W^\Q$ defined by $$B([X,Y], Z) = \langle Z(X),Y \rangle$$ for all $Z \in \lies(V^\Q)$, $X,Y \in V^\Q$. This is a $2$-step nilpotent Lie algebra where the map $J: \gamma_2(V^\Q \oplus W^\Q) = W^\Q \to \lies(V^\Q)$ is the inclusion. If we take the vector space $W^\Q = \lies(V^\Q)$, then the result is the free $2$-step nilpotent Lie algebra on $V^\Q$. An isomorphism $\alpha: V^\Q \to V^\Q$ induces an automorphism $\bar{\alpha}: \lien^\Q \to \lien^\Q$ if and only if $\alpha^T Z \alpha \in W^\Q$ for all $Z \in W^\Q$.

Let $\lien^\Q$ be a Lie algebra of type $(m,k)$ and consider the map $J: \gamma_2(\lien^\Q) \to \lies(V^\Q)$ as introduced above. Denote the image of $J$ as $W^\Q$, then it follows by definition that $\lien^\Q$ is isomorphic to the Lie algebra $V^\Q \oplus W^\Q$ of the previous paragraph. The dual of $\lien^\Q$ is then the Lie algebra $\tilde{n}^\Q = V^\Q \oplus \tilde{W}^\Q$ with Lie bracket as in the previous paragraph, where $\tilde{W}^\Q$ is the orthogonal complement of $W^\Q$ in $\lies(V^\Q)$ relative to the inner product $B$ given above. The dual of the Lie algebra $\lien^\Q_k$ is denoted by $\lieh_k^\Q$. If $\alpha \in \GL(V^\Q)$ induces an automorphism $\bar{\alpha}$ of $\lien^\Q$, then $\alpha^T$ induces an automorphism on $\tilde{\lien}^\Q$ since $\alpha \tilde{W}^\Q \alpha^T = \tilde{W}^\Q$ and this map is called the dual automorphism of $\bar{\alpha}$. The combined eigenvalues of $\bar{\alpha}$ and its dual on $\gamma_2(\lien^\Q)$ and $\gamma_2(\tilde{\lien}^\Q)$ are equal to the eigenvalues of the map that $\alpha$ induces on $\gamma_2(V^\Q \oplus \lies(V^\Q))$ where $V^\Q \oplus \lies(V^\Q)$ is the free $2$-step nilpotent Lie algebra on $V^\Q$. 

\smallskip

The dual Lie algebra of $\lien^\Q_k$ is of type $(4,4)$ and denoted as $\lieh^\Q_k$. The Lie algebra $\lieh^\Q_k$ can also be described as the one with basis $X_1, X_2, X_3, X_4, Z_1, Z_2, Z_3, Z_4$ and relations 
\begin{equation*}
\begin{aligned}[c]
[X_1,X_2] &= Z_1  \\ 
[X_1,X_3] &= Z_2 \\
[X_1,X_4] &= kZ_3
\end{aligned}
\hspace{2cm}
\begin{aligned}[c]
[X_2,X_3] &= -Z_3  \\ 
[X_2,X_4] &= -Z_2 \\
[X_3,X_4] &= Z_4,
\end{aligned}
\end{equation*}
see for example \cite{laur08-1}. From the Scheuneman duality, the following proposition is immediate:

\begin{Prop}
For every $k \in \N$ with $k > 1$ and $k$ not a square, there exists an Anosov automorphism on $\lieh^\Q_k$ with $\sg(f) = \{3,5\}$. On $\lieh^\Q_1$ every Anosov automorphism has signature $\{4,4\}$.
\end{Prop}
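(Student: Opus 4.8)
The plan is to read off both assertions from the Scheuneman duality between $\lien^\Q_k$ and its dual $\lieh^\Q_k=\widetilde{\lien^\Q_k}$, using the ``combined eigenvalues'' fact recalled above: when an automorphism is induced by an $\alpha\in\GL(V^\Q)$ with eigenvalues $\nu_1,\dots,\nu_4$ on $V^\Q$, the six products $\nu_i\nu_j$ ($i<j$) are the eigenvalues of the map induced by $\alpha$ on $\gamma_2$ of the free $2$-step algebra $V^\Q\oplus\lies(V^\Q)$, two of them occurring on $\gamma_2(\lien^\Q_k)$ and the complementary four on $\gamma_2(\lieh^\Q_k)$. Since $\lien^\Q_k\cong\lien^\Q_{q^2k}$ implies $\lieh^\Q_k\cong\lieh^\Q_{q^2k}$, for the first assertion it is enough to treat $k>1$ square free.

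For the first assertion I would take the Anosov automorphism $f$ of $\lien^\Q_k$ from Proposition~\ref{count}: on a complement $V^\Q$ of $\gamma_2(\lien^\Q_k)$ it is diagonal with eigenvalues the unit Pisot number $\lambda_1>1$ and its Galois conjugates $\lambda_2,\lambda_3,\lambda_4$ (all of absolute value $<1$), and on $\gamma_2(\lien^\Q_k)$ it has eigenvalues $\lambda_1\lambda_2$ and $\lambda_3\lambda_4$ with $|\lambda_1\lambda_2|>1>|\lambda_3\lambda_4|$ (this being what $\sg(f)=\{2,4\}$ says). By the duality, $f=\bar\alpha$ with $\alpha=f|_{V^\Q}$ dualizes to an automorphism $f'$ of $\lieh^\Q_k$ having eigenvalues $\lambda_1,\dots,\lambda_4$ on a complement of $\gamma_2(\lieh^\Q_k)$ and the four remaining products $\lambda_1\lambda_3,\lambda_1\lambda_4,\lambda_2\lambda_3,\lambda_2\lambda_4$ on $\gamma_2(\lieh^\Q_k)$; all these are algebraic units, so $f'$ is integer-like by the argument in the proof of Theorem~\ref{main}. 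Since $\lambda_1>1>|\lambda_2|,|\lambda_3|,|\lambda_4|$ and $\lambda_1\lambda_2\lambda_3\lambda_4=\pm1$ (the norm of the unit $\lambda_1$), every product $\lambda_1\lambda_j$ satisfies $|\lambda_1\lambda_j|=|\lambda_k\lambda_l|^{-1}>1$ (with $\{k,l\}=\{2,3,4\}\setminus\{j\}$) while every $\lambda_i\lambda_j$ with $i,j\ge2$ has absolute value $<1$; hence exactly three of the six products are expanding, one of them ($\lambda_1\lambda_2$) lying on $\gamma_2(\lien^\Q_k)$, so exactly two of the four eigenvalues of $f'$ on $\gamma_2(\lieh^\Q_k)$ are expanding. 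Together with $\lambda_1$ on the complement, $f'$ has $1+2=3$ expanding and $3+2=5$ contracting eigenvalues, so $\sg(f')=\{3,5\}$. The one delicate point is hyperbolicity of $f'$: one must pick the auxiliary field and unit Pisot number in Proposition~\ref{count} so that none of the $\lambda_i\lambda_j$ equals $\pm1$, which is harmless since each is a unit of a real quadratic subfield and so equals $\pm1$ only under a non-trivial algebraic condition that a suitable choice avoids.

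For the second assertion let $g$ be an Anosov automorphism of $\lieh^\Q_1$, and let $\alpha$ be the map induced by $g$ on $V^\Q=\lieh^\Q_1/\gamma_2(\lieh^\Q_1)$, with eigenvalues $\nu_1,\dots,\nu_4$; as $\gamma_2(\lieh^\Q_1)$ is a characteristic ideal, the eigenvalues of $g$ are $\nu_1,\dots,\nu_4$ together with the eigenvalues of $g$ on $\gamma_2(\lieh^\Q_1)$. Exactly as in the type-$(4,2)$ discussion above (using equation~(\ref{aut}) and $\Pf(A^TBA)=\det(A)\Pf(B)$), after replacing $g$ by $g^2$ if necessary so that $\det\alpha=1$ --- possible since $\det\alpha=\pm1$, and harmless since squaring does not change the signature --- $g$ acts on $\gamma_2(\lieh^\Q_1)$ as an isometry of the Pfaffian form $h$ of $\lieh^\Q_1$, now a non-degenerate quadratic form in four variables; hence its eigenvalues on $\gamma_2(\lieh^\Q_1)$ come in pairs $\mu,\mu^{-1}$, and by hyperbolicity exactly two of them are expanding. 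By the duality, $\alpha$ induces an automorphism $g'$ of $\widetilde{\lieh^\Q_1}=\lien^\Q_1$, and by the combined-eigenvalues fact the two eigenvalues of $g'$ on $\gamma_2(\lien^\Q_1)$ are among the products $\nu_i\nu_j$; thus all eigenvalues of $g'$ are algebraic units and $g'$ is integer-like.

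Now $\lien^\Q_1$ is \emph{not} Anosov --- its Pfaffian form $X^2-Y^2$ has discriminant $4$, a perfect square, whereas every Anosov Lie algebra of type $(4,2)$ is isomorphic to $\lien^\Q_{k'}$ for some square-free $k'>1$ --- so the integer-like $g'$ cannot be hyperbolic, and since $|\nu_i|\ne1$ for all $i$, some product $\nu_i\nu_j$ must have absolute value $1$. But if $\alpha$ had exactly one expanding eigenvalue, say $|\nu_1|>1>|\nu_2|,|\nu_3|,|\nu_4|$, then using $\nu_1\nu_2\nu_3\nu_4=\det\alpha=1$ we would get $|\nu_1\nu_j|=|\nu_k\nu_l|^{-1}>1$ for each $j\ge2$ (where $\{k,l\}=\{2,3,4\}\setminus\{j\}$) and $|\nu_i\nu_j|<1$ for $i,j\ge2$, so no product $\nu_i\nu_j$ would have absolute value $1$ --- a contradiction. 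Since $\prod_i|\nu_i|=1$ rules out $0$ or $4$ expanding eigenvalues, and replacing $g$ by $g^{-1}$ rules out three, $\alpha$ must have exactly two expanding eigenvalues, and combined with the two on $\gamma_2(\lieh^\Q_1)$ this gives $\sg(g)=\{4,4\}$. I expect the main obstacle to be the bookkeeping in the duality step --- checking that the graded part of an arbitrary Anosov automorphism of $\lieh^\Q_1$ really yields an integer-like automorphism of $\lien^\Q_1$ whose extra $\gamma_2(\lien^\Q_1)$-eigenvalues lie among the $\nu_i\nu_j$ --- together with securing hyperbolicity of the dual automorphism in the first part.
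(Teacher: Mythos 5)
Your argument is correct and follows essentially the same route as the paper: apply Scheuneman duality to the signature-$\{2,4\}$ automorphism of Proposition \ref{count} for the first claim, and use the fact that $\lien^\Q_1$ is not Anosov for the second. The paper's own proof is a three-line remark (``the dual of an Anosov automorphism of signature $\{2,4\}$ is Anosov of signature $\{3,5\}$ and vice versa''), so the bookkeeping you supply --- which products $\lambda_i\lambda_j$ land on $\gamma_2(\lien^\Q_k)$ versus $\gamma_2(\lieh^\Q_k)$, integer-likeness of the dual, and, for $\lieh^\Q_1$, the pairing $\mu,\mu^{-1}$ of the $\gamma_2$-eigenvalues coming from invariance of the non-degenerate Pfaffian form, which is what excludes signature $\{2,6\}$, a case the paper's one-liner does not explicitly address --- is exactly the content the paper leaves implicit. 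One small remark: the ``delicate point'' you flag in the first part is not delicate; your own inequalities show that for a unit Pisot number $|\lambda_i\lambda_j|<1$ whenever $i,j\ge 2$ and hence $|\lambda_1\lambda_j|>1$ for $j\ge 2$, so hyperbolicity of the dual automorphism is automatic and no extra condition on the choice of field or Pisot unit is needed.
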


\begin{proof}
Note that the dual of an Anosov automorphism with $\sg(f)= \{2,4\}$ is also Anosov with signature $\{3,5\}$ and vice versa. So the first part follows from the fact that $\lieh^\Q_k$ is the dual of $\lien^\Q_k$. Also the second statement follows since the Lie algebra $\lien^\Q_1$ is not Anosov.
\end{proof}

For all other Lie algebras, (False) Claim \ref{corlaur} is correct (and the arguments to prove it are the same as the ones used to prove the classification of Anosov Lie algebras up to dimension $8$). Also, the Lie algebra $\lieh^\Q_k$ does not admit an Anosov automorphism of signature $(2,6)$, for example by using the same number theoretical arguments as in \cite{lw09-1}. So the combined results above determine completely for which Anosov Lie algebras (False) Claim \ref{corlaur} is indeed false.

 Note that the examples of Proposition \ref{count} also answer Question \ref{q1} about non-abelian examples of signature $\{2,q\}$ for some $q \in \N_0$. We give a more general approach to this question in the next section.

\subsection{Anosov automorphisms of minimal signature and minimal type}
\label{sign}

In this subsection, we show how the main theorem can be used to construct Anosov automorphisms of minimal signature and Anosov Lie algebras of minimal type. These examples answer Questions \ref{q1} and \ref{q2} which we already mentioned in the introduction.

\smallskip

First we recall some basic properties of unit Pisot numbers. If $E$ is a real Galois extension of $\Q$ of degree $n$, then we call an algebraic integer a Pisot number if $ \lambda  > 1$ and for all $1 \neq \sigma \in \Gal(E,\Q)$, it holds that $\vert \sigma(\lambda) \vert  <1$. An unit Pisot number is then an algebraic unit which is also a Pisot number. We say that an algebraic unit $\lambda$ with Galois conjugates $\lambda_1= \lambda, \ldots, \lambda_n$ satisfies the full rank condition if for all $d_1, \ldots, d_n$ integers with $$\prod_{j=1}^n \lambda_j^{d_j} = \pm 1,$$ it must hold that $d_1 = d_2 = \ldots = d_n$. From \cite[Proposition 3.6.]{payn09-1} it follows that every unit Pisot number satisfies the full rank condition.

\smallskip

Let $\lien^\Q$ be an Anosov Lie algebra of nilpotency class $c$ and $f: \lien^\Q \to \lien^\Q$ a hyperbolic integer-like automorphism with signature $\{p,q\}$. The characteristic polynomial $h(X)$ of $f$ has integer coefficients and constant term $\pm 1$. This implies that if $g(X)$ is a rational polynomial which divides $h(X)$, then it must have at least one root of absolute value strictly smaller than $1$. We know that $f$ induces an isomorphism on each quotient $\faktor{\gamma_{i-1}(\lien^\Q)}{ \gamma_i(\lien^\Q)}$ and thus the polynomial $h(X)$ has at least $c$ irreducible factors. Therefore $f$ has at least $c$ eigenvalues of absolute value strictly smaller than $1$ and thus $p \geq c$. By considering $f^{-1}$ as well, we get that $q \geq c$ and this shows that $\min (\sg(f)) \geq c$. We say that an Anosov automorphism $f: \lien^\Q \to \lien^\Q$ has minimal signature if equality holds, i.e.\ if $\min(\sg(f)) = c$. 

\smallskip

Question \ref{q1} asks if there exists Anosov automorphisms of minimal signature for $c= 2$ and already in Section \ref{verb} we gave a positive answer to this question as a consequence of the main theorem. So the existence of Anosov automorphisms of minimal signature is a generalization of Question \ref{q1} and with Theorem \ref{main} we can also give a positive answer to the generalized question:

\begin{Thm}
\label{csig}
For every $c$, there exists an Anosov automorphism $f: \lien^\Q \to \lien^\Q$ on a Lie algebra of nilpotency class $c$ such that $f$ is of minimal signature. 
\end{Thm}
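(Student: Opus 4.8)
The plan is to build the Lie algebra and its automorphism using Corollary \ref{main2} (in the one-dimensional form of Remark \ref{oned}), choosing the field extension and the indexing algebraic units so that the number of eigenvalues of absolute value $>1$ is forced to be exactly $c$ in each graded piece. Concretely, I would take a real Galois extension $E$ of $\Q$ of degree $n$ (with $n$ chosen large enough, say $n = c+1$ or more) together with a unit Pisot number $\lambda \in E$ with Galois conjugates $\lambda = \lambda_1, \lambda_2, \ldots, \lambda_n$, so that $\lambda_1 > 1$ and $\lvert \lambda_i \rvert < 1$ for $i \geq 2$. The key feature of a unit Pisot number, recalled just before the statement, is that it satisfies the full rank condition, so that a product $\prod_j \lambda_j^{d_j}$ equals $\pm 1$ only when all $d_j$ are equal; this is exactly what guarantees hyperbolicity of every automorphism we write down from monomials in the $\lambda_j$.

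Next I would construct $\lien^\Q$ as a graded Lie algebra of nilpotency class $c$ whose $i$-th graded piece is spanned by basis vectors indexed by degree-$i$ monomials $\lambda_{j_1}\cdots\lambda_{j_i}$ in the conjugates, with the Lie bracket sending $X_{\mu} , X_{\nu}$ to $\pm X_{\mu\nu}$ whenever the product monomial is one of the chosen basis indices (and to $0$ otherwise), arranging the combinatorics so that in degree $i$ every basis monomial has exactly one ``large'' factor $\lambda_1$ appearing (for instance $\lambda_1\lambda_{j_2}\cdots\lambda_{j_i}$ with $j_2,\ldots,j_i \geq 2$, together with enough other monomials to make the bracket close and to realize class exactly $c$). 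The simplest incarnation of this, and the one I expect to use, is to take the relevant graded quotient of the free $c$-step nilpotent Lie algebra on $n$ generators $X_{\lambda_1},\ldots,X_{\lambda_n}$, keeping only a Galois-stable set of bracket monomials; the Galois group permutes the generators via its action on the conjugates, and hence permutes (up to sign) the chosen monomial basis, giving the required representation $\rho:\Gal(E,\Q)\to\Aut(\lien^\Q)$ with $\rho_\sigma(X_{\lambda})=\pm X_{\sigma(\lambda)}$. The only thing to check, as Remark \ref{oned} notes, is that $\rho$ is genuinely a representation, which is immediate since $\sigma\mapsto(\text{permutation of conjugates})$ is a homomorphism and the signs can be taken trivial if the monomial set is chosen symmetrically.

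Corollary \ref{main2} then produces a rational form $\liem^\Q$ of $\lien^E$ on which the diagonal map $f$ with $f(X_\mu)=\mu X_\mu$ induces an automorphism, and since every index $\mu$ is a product of the $\lambda_j$, hence an algebraic unit, and the full rank condition forbids any such product from having absolute value $1$ (a nontrivial monomial in the $\lambda_j$ is $\pm 1$ only if all exponents agree, which does not happen for our degree-$i\le c$ monomials as long as $n>c$), all eigenvalues of $f$ are algebraic units of absolute value $\neq 1$; thus $\liem^\Q$ is Anosov. Finally, to get \emph{minimal} signature I count eigenvalues of absolute value $>1$: in each graded piece, a monomial $\lambda_{j_1}\cdots\lambda_{j_i}$ has absolute value $>1$ precisely when the single large conjugate $\lambda_1$ dominates the product of the small ones, which by the construction happens for exactly one monomial per graded piece; summing over the $c$ graded pieces gives exactly $c$ eigenvalues outside the unit disk, so $\min(\sg(f)) = c$, matching the lower bound $\min(\sg(f))\ge c$ recalled above. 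The main obstacle I anticipate is purely combinatorial: arranging the monomial basis in each degree so that (a) the bracket is well-defined, associative-compatible and closes within the chosen set, (b) the set is stable under the Galois permutation action, (c) the nilpotency class is exactly $c$ and not smaller, and (d) exactly one monomial per degree lies outside the unit disk. A clean way to sidestep most of this is to pick $\lambda$ so that $\lvert\lambda_1\rvert$ is enormous compared to the other $\lvert\lambda_i\rvert$ (which one may do by replacing $\lambda$ by a high power, still a unit Pisot number), guaranteeing that among the degree-$i$ monomials the ones with absolute value $>1$ are exactly those containing the factor $\lambda_1$, and then further restricting the basis so that only \emph{one} such monomial is retained in each degree — for instance using the ``staircase'' monomials $\lambda_1\lambda_2^{\,i-1}$ in degree $i$, padded out with conjugate monomials of small absolute value to keep the set Galois-stable and the bracket closed.
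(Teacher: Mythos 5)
Your overall framework is the same as the paper's (Corollary \ref{main2} with one-dimensional eigenspaces indexed by monomials in the conjugates of a unit Pisot number, then counting eigenvalues of absolute value $>1$), but the step you yourself flag as ``the main obstacle'' is exactly the mathematical content of the theorem, and the concrete fix you propose does not work. The problem is that Galois-stability of the index set is incompatible with your counting scheme: if the staircase monomial $\lambda_1\lambda_2^{\,i-1}$ is in the basis, then so is its conjugate $\lambda_j\lambda_1^{\,i-1}$ (for the Galois element sending $\lambda_2\mapsto\lambda_1$), and since $\prod_k \lambda_k=\pm1$ forces $\vert\lambda_j\vert\ge 1/\lambda_1$ for every $j$, one gets $\vert\lambda_j\lambda_1^{\,i-1}\vert\ge\lambda_1^{\,i-2}>1$ for all $i\ge3$. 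So the ``padding'' conjugates are \emph{not} small, and replacing $\lambda$ by a high power only makes this worse; each graded piece of degree $\ge3$ acquires at least two eigenvalues outside the unit disk, destroying minimality. (Already in degree $2$ a transitive orbit of pairs $\lambda_i\lambda_{i+1}$ under a cyclic action contains two large products, $\lambda_1\lambda_2$ and $\lambda_n\lambda_1$.) Also, your statement that every degree-$i$ basis monomial contains the factor $\lambda_1$ exactly once cannot hold for a nonempty Galois-stable set, since applying an element moving $\lambda_1$ off itself produces monomials without $\lambda_1$ or with repeated $\lambda_1$.

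The paper's proof gets around precisely these points with two devices you are missing. First, it takes $\Gal(E,\Q)\cong\Z_{2n}$ and builds all monomials from an ``antipodal'' pair of conjugates, $\mu_{i,j}=\sigma^i(\lambda^{j-1})\sigma^{i+n}(\lambda)$; because the degree-two products are paired by the involution $\sigma^n$, the orbit in degree $2$ contains exactly one large element $\lambda\sigma^n(\lambda)$ (and the coincidence $\mu_{i+n,2}=\mu_{i,2}$ forces a sign in the representation $\rho$, needed for compatibility with antisymmetry of the bracket). Second, the remaining potentially large family $\mu_{n,j}=\lambda\,\sigma^n(\lambda^{j-1})$, $j\ge3$, is killed by choosing the unit Pisot number with the extra condition $\vert\lambda\,\sigma^n(\lambda^2)\vert<1$, whose existence is not automatic and is proved in the Appendix via Dirichlet's unit theorem. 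Without an arithmetic condition of this kind, no choice of monomial basis that is Galois-stable and closed under the bracket will give you exactly one large eigenvalue per graded piece, so your proposal has a genuine gap at its central step. (Minor additional point: hyperbolicity should be argued from the explicit size estimates, or at least from the fact that $E$ is totally real so an algebraic unit of absolute value $1$ is $\pm1$; the full rank condition alone only rules out products equal to $\pm1$.)
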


\begin{proof}
Let $E$ be a real Galois extension of $\Q$ with $\Gal(E,\Q) \cong \Z_{2n}$ for some $n > 1$ and $\sigma$ a generator of $\Gal(E,\Q)$. Take $\lambda$ an unit Pisot number in $E$ with the extra condition that $\vert \lambda \sigma^n(\lambda^2) \vert < 1$ (see the Appendix for more details). Since $n > 1$, we have that $ \vert \lambda \sigma^n(\lambda) \vert > \vert \prod_{i=1}^{2n} \sigma^i(\lambda) \vert = 1$. Consider the collection of algebraic integers $$\mu_{i,j} = \sigma^i(\lambda^{j-1}) \sigma^{i+n}(\lambda)$$ for all $i \in \{1, \ldots, 2n\}$ and all $j \in \{1,\ldots, c\}$. The Galois conjugates $\sigma^i(\lambda)$ of $\lambda$ are the $\mu_{i,j}$ with $j = 1$. Note that the definition implies that $\mu_{i+n,2} = \mu_{i,2}$ and all other $\mu_{i,j}$ are distinct because of the full rank condition. 

Every $\mu_{i,j}$ with $i \notin \{n,2n\}$ has absolute value $<1$ since $\lambda$ is a Pisot number. The algebraic unit $\mu_{n,3} = \lambda \sigma^n(\lambda^2)$ satisfies $\vert \mu_{n,3} \vert < 1$ because of our choice of $\lambda$ and therefore also all $\mu_{n,j} = \sigma^n\left(\lambda^{j-3}\right) \mu_{n,3} $ with $j \geq 3$ have absolute value $<1$. Thus it follows that of all $\mu_{i,j}$, only $$\mu_{n,1}= \lambda,\mu_{n,2} = \mu_{2n,2} =  \lambda \sigma^n(\lambda), \mu_{2n,3}= \lambda^2 \sigma^n(\lambda), \ldots, \mu_{2n,c} = \lambda^{c-1} \sigma^n(\lambda)$$ have absolute value $>1$, so in total there are $c$ of the $\mu_{i,j}$ with $\vert \mu_{i,j} \vert > 1$.

Now consider the Lie algebra $\lien^\Q$ with basis $X_{\mu_{ij}}$ for all values of $i$ and $j$, where we write the $\mu_{i,1}$ as the conjugates of $\lambda$. The Lie bracket on $\lien^\Q$ is given by on the one hand 
\begin{align*}
[ X_{\sigma^i(\lambda)}, X_{\sigma^{i+n}(\lambda)}] = X_{\mu_{i,2}}
\end{align*}
for all $i \in \{1, \ldots, n\}$ and on the other hand by 
\begin{align*}
[ X_{\sigma^i(\lambda)}, X_{\mu_{i,j}}] = X_{\mu_{i,j+1}}
\end{align*}
for all $i \in \{1,\ldots, 2n\}, \hspace{1mm} j \in \{2, \ldots, c\}$ (and all other brackets are $0$). It is easy to check that these relations define a Lie algebra (i.e.\ that the Jacobi identity holds) and that the $1$-dimensional subspaces spanned by each basis vector satisfy the conditions of Corollary \ref{main2}. The map $\rho_\sigma$ given by $\rho_\sigma(X_{\mu_{i,j}}) = - X_{\sigma(\mu_{i,j})}$ for $i \in \{ n, 2n \}$ and $j \geq 2$ and $\rho_\sigma(X_{\mu_{i,j}}) = X_{\sigma(\mu_{i,j})}$ for all other $(i,j)$ defines a representation $\rho: \Gal(E,\Q) \to \Aut(\lien^\Q)$. The minus sign in the first case comes from the relation $\mu_{i+n,2} = \mu_{i,2}$. The conditions of Corollary \ref{main2} are satisfied for $\rho$ and thus this gives us a rational form $\liem^\Q$ with Anosov automorphism $f$. There are only $c$ eigenvalues of $f$ with absolute value $>1$, so $f$ is of minimal signature.
\end{proof}

The type of the example constructed in this theorem is equal to $$\underbrace{(2n,n,2n, 2n,\ldots, 2n)}_{c\ {\rm components}}$$ for all $n \geq 2$. This is not the only possibility for Anosov automorphisms of minimal signature since one can construct examples on Lie algebras of type $$\underbrace{(2n,n,2n, n, 2n,\ldots)}_{c\ {\rm components}},$$ where the induced eigenvalues on $\faktor{\gamma_{2j}(\lien^\Q)}{ \gamma_{2j+1}(\lien^\Q)}$ are of the form $\left( \sigma^{i}\left(\lambda\right) \sigma^{n+i}\left(\lambda\right)\right)^j$. The construction of such examples is similar as in Theorem \ref{csig}. We conjecture that these are the only possibilities:

\begin{Con}
Let $f: \lien^\Q \to \lien^\Q$ be an Anosov automorphism of minimal signature, then the type of $\lien^\Q$ is one of the following:
\begin{enumerate}[(i)]
\item $(2n,n,2n, 2n,\ldots, 2n)$ or
\item $(2n,n,2n, n, 2n,\ldots)$,
\end{enumerate}
where $n > 1$.
\end{Con}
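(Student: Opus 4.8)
The plan is to combine the Galois-theoretic picture behind Theorem~\ref{csig} with the arithmetic of the eigenvalues. First I would normalise: replace $f$ by its semisimple part (again a Lie algebra automorphism, by the Jordan decomposition in the algebraic group $\Aut(\lien^\C)$, and with the same characteristic polynomial), and replace $f$ by $f^{-1}$ if necessary so that $f$ has exactly $c=\min(\sg(f))$ eigenvalues of absolute value $<1$. Passing to the splitting field $E$ of the characteristic polynomial, with $G=\Gal(E,\Q)$, one obtains a weight-space decomposition $\lien^E=\bigoplus_\lambda\lien_\lambda$ with $[\lien_\lambda,\lien_\mu]\subseteq\lien_{\lambda\mu}$; since $f$ preserves every $\gamma_i(\lien^\Q)$ it induces an integer-like automorphism $f_i$ of $\gamma_i(\lien^\Q)/\gamma_{i+1}(\lien^\Q)$, and I write $W_i$ for its multiset of eigenvalues (the weights on that graded piece), $p_i\in\Z[X]$ for its characteristic polynomial, and $n_i=|W_i|$, so that $\prod_i p_i$ is the characteristic polynomial of $f$.

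The first substantial step is a rigidity statement for each graded piece. Since $\prod_i p_i(0)=\pm1$ and each $p_i(0)\in\Z$, every $p_i(0)=\pm1$, so $\prod_{w\in W_i}w=\pm1$ and $W_i$ contains an eigenvalue of absolute value $<1$ (hyperbolicity excludes absolute value exactly $1$). As there are $c$ graded pieces and only $c$ eigenvalues of absolute value $<1$ in total, each $W_i$ contains exactly one such eigenvalue $\lambda_i$. Now $W_i$ is $G$-stable, so the Galois orbit $O_i$ of $\lambda_i$ lies in $W_i$; $\lambda_i$ is its unique element of absolute value $<1$, and $\prod O_i=\pm N_{\Q(\lambda_i)/\Q}(\lambda_i)=\pm1$. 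If $W_i$ properly contained $O_i$, the leftover weights would all have absolute value $>1$ and yet multiply to $\pm1$, which is impossible; hence $W_i$ is $O_i$ with some multiplicity $m_i\ge1$, so $p_i$ is a power of an irreducible polynomial and $n_i=m_i[\Q(\lambda_i):\Q]$. In particular $\lambda_i^{-1}$ is, up to this multiplicity, an ``inverse Pisot unit'': an algebraic unit with a single conjugate inside the unit disc.

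Next I would propagate down the lower central series. Because $[\gamma_1(\lien^\Q),\gamma_i(\lien^\Q)]=\gamma_{i+1}(\lien^\Q)$, every weight of $W_{i+1}$ is a product $ww'$ with $w\in W_1$ and $w'\in W_i$, and since $|\lambda_{i+1}|<1$ at least one factor is small, so $\lambda_{i+1}=\lambda_1 w'$ with $w'\in W_i$ or $\lambda_{i+1}=w\lambda_i$ with $w\in W_1$. The heart of the matter is to show that in fact $\lambda_{i+1}=\lambda_1\lambda_i$ and, for $i=1$, that $\lambda_2=\lambda_1\nu$ with $\nu\in W_1$ of absolute value $>1$ (so that $\lambda_i=\lambda_1^{\,i-1}\nu$ for all $i\ge2$): one must use the bracket structure to exclude the degenerate branches such as $\lambda_2=\lambda_1^2$, typically by combining non-vanishing of the relevant bracket with the full rank condition (satisfied by every unit Pisot number, \cite[Proposition~3.6.]{payn09-1}, and inherited by the products of conjugates that occur here) to force a second conjugate of absolute value $<1$, which is forbidden. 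Once the chain of small weights is known, the remaining steps are group-theoretic: the full rank condition makes every element of $G$ permute the conjugates of $\lambda_1$ ``rigidly'' (respecting the multiplicative factorisation), so $\mathrm{Stab}_G(\lambda_1\nu)$ is generated by $\mathrm{Stab}_G(\lambda_1)$ together with a single extra involution interchanging $\lambda_1$ and $\nu$, whence $n_2=n_1/2$; writing $n_1=2n$, $n_2=n$. The same computation for $\lambda_3=\lambda_1^2\nu$ shows no such extra symmetry can survive (it would produce a second conjugate of $\lambda_1$, or of $\lambda_1\nu$, inside the unit disc), so $n_3=n_1=2n$; and for $i\ge4$ the degree of $\lambda_i=\lambda_1^{\,i-1}\nu$ can only drop back to $n$ when $\lambda_i$ is a power of the degree-$n$ unit $\lambda_1\nu$, which happens precisely for the even $i$ of the alternating pattern. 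Matching the admissible stabiliser configurations then yields exactly the two sequences $(2n,n,2n,2n,\ldots,2n)$ and $(2n,n,2n,n,2n,\ldots)$.

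The main obstacle is the step asserting $\lambda_{i+1}=\lambda_1\lambda_i$ --- more precisely, controlling which formal products of degree-one weights actually survive as weights on each graded piece --- because the bracket of two prescribed weight spaces may vanish, so this cannot be read off from eigenvalue arithmetic alone and must use the Lie algebra structure itself; this is presumably why the statement is only a conjecture. Secondary difficulties are handling the multiplicities $m_i>1$ uniformly, ruling out the sporadic type $(3,3)$-style possibilities left open by the known constraints on Anosov types, and the final bookkeeping separating the two patterns; I would expect these to be manageable once the weight structure is pinned down.
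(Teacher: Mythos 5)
The statement you are trying to prove is not proved in the paper at all: it is stated as a \emph{conjecture}, and the author explicitly leaves it open (``The methods of this paper are useful to prove or disprove this conjecture''). So there is no proof of the paper's to compare against, and your proposal should be judged as an attempt at an open problem. The opening reductions you make are sound and essentially reproduce the paper's lower-bound argument from Section \ref{sign}: with minimal signature each graded quotient $\faktor{\gamma_i(\lien^\Q)}{\gamma_{i+1}(\lien^\Q)}$ carries exactly one eigenvalue of modulus $<1$ (counted with multiplicity, which in fact forces your $m_i=1$), and the Galois-stability-plus-unit argument showing each $W_i$ is a single Galois orbit of an algebraic unit with a unique conjugate inside the unit disc is correct. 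The inclusion $W_{i+1}\subseteq W_1\cdot W_i$ from $[\gamma_1,\gamma_i]=\gamma_{i+1}$ is also fine.

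Everything after that is where the conjecture actually lives, and those steps are asserted rather than proved. Eigenvalue arithmetic cannot exclude the branch $\lambda_2=\lambda_1^2$: since every conjugate of $\lambda_1$ other than $\lambda_1$ itself lies outside the unit disc, the same is true of $\lambda_1^2$, so a configuration with $W_2$ the orbit of $\lambda_1^2$ (giving, e.g., $n_2=n_1$) is perfectly consistent with all the constraints you have established; ruling it out requires genuine input from the bracket structure (note also that $[\lien_{\lambda_1},\lien_{\lambda_1}]$ need not vanish when the weight space has dimension $>1$), and you concede this point yourself. Likewise the group-theoretic endgame is not an argument: the claim that $\mathrm{Stab}_{\Gal(E,\Q)}(\lambda_1\nu)$ is generated by $\mathrm{Stab}_{\Gal(E,\Q)}(\lambda_1)$ and one extra involution (hence $n_2=n_1/2$, and in particular $n_1$ even), the claim that no symmetry survives for $\lambda_1^2\nu$ (hence $n_3=2n$), and the parity bookkeeping separating the two patterns are exactly the content of the conjectured types $(2n,n,2n,\ldots,2n)$ and $(2n,n,2n,n,2n,\ldots)$, not consequences of anything you have shown. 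So the proposal is a reasonable strategy outline, consistent with how the paper's own constructions (Theorem \ref{csig} and its variant) realize the two conjectured types, but it is not a proof, and no such proof exists in the paper.
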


\noindent The methods of this paper are useful to prove or disprove this conjecture.

\medskip

In a similar way, the main theorem also gives a positive answer to Question \ref{q2}. We state this theorem in a more general setting:

\begin{Thm}
\label{last}
For every $c \in \N_0$ and $n \in \N$ with $n > 2$, there exists an Anosov Lie algebra of type $(n,\ldots,n)$ and nilpotency class $c$.
\end{Thm}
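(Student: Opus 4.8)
The plan is to apply Corollary~\ref{main2}, in the spirit of the proof of Theorem~\ref{csig}: I would construct an explicit $c$-step nilpotent Lie algebra $\lien^\Q$ of type $(n,\dots,n)$ together with a one-dimensional weight decomposition indexed by the Galois conjugates of a suitable unit Pisot number and a permutation representation of the Galois group. Let $E$ be a real Galois extension of $\Q$ of degree $n$ with cyclic Galois group $\Gal(E,\Q)=\langle\sigma\rangle\cong\Z_n$, and let $\lambda\in E$ be a unit Pisot number chosen, by the Appendix, to satisfy in addition $|\sigma^{-1}(\lambda)^{j-1}\lambda|\neq 1$ for all $j\in\{2,\dots,c\}$. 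Write $\lambda_i=\sigma^i(\lambda)$ for $i\in\Z_n$ (indices modulo $n$, with $\sigma^0=\mathrm{id}$), so that $|\lambda_0|=|\lambda|>1$ and $|\lambda_i|<1$ for $i\neq 0$.

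I would take $\lien^\Q$ to be the Lie algebra with basis $\{X_i\mid i\in\Z_n\}\cup\{Y_i^{(j)}\mid i\in\Z_n,\ 2\le j\le c\}$ and nonzero brackets
\[
[X_i,X_{i+1}]=Y_i^{(2)}\quad(i\in\Z_n),\qquad [X_i,Y_i^{(j)}]=Y_i^{(j+1)}\quad(i\in\Z_n,\ 2\le j\le c-1).
\]
Since $n>2$, the unordered pairs $\{i,i+1\}$ ($i\in\Z_n$) are pairwise distinct, so this is well defined; a routine check of the Jacobi identity shows it is a Lie algebra (alternatively, $\lien^\Q$ is a quotient of the free $c$-step nilpotent Lie algebra on $n$ generators). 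Computing the lower central series gives that $\gamma_k(\lien^\Q)$ is spanned by the $Y_i^{(j)}$ with $j\ge k$, for every $k\ge 2$; hence $\lien^\Q$ has nilpotency class exactly $c$ and type $(n,\dots,n)$. I would then assign to $X_i$ the weight $\lambda_i$ and to $Y_i^{(j)}$ the weight $\lambda_i^{j-1}\lambda_{i+1}$; the bracket relations are compatible with multiplication of weights, so $\lien^\Q$ decomposes as in Corollary~\ref{main2}. Let $\rho\colon\Gal(E,\Q)\to\Aut(\lien^\Q)$ be the representation in which $\rho_\sigma$ shifts every index by one, $\rho_\sigma(X_i)=X_{i+1}$ and $\rho_\sigma(Y_i^{(j)})=Y_{i+1}^{(j)}$; this is visibly a representation, and it carries the weight-$\mu$ subspace to the weight-$\sigma(\mu)$ subspace, so the hypotheses of Corollary~\ref{main2} on $\rho$ hold.

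To finish I would check that every weight is an algebraic unit of absolute value different from $1$. Each weight is a product of conjugates of the unit $\lambda$, hence an algebraic unit. The weights $\lambda_i$ have absolute value $\neq 1$ since $\lambda$ is Pisot. A weight $\lambda_i^{j-1}\lambda_{i+1}$ with $i\neq 0$ and $i+1\neq 0$ is a product of numbers of absolute value $<1$. If $i=0$ the weight is $\lambda^{j-1}\lambda_1$; as $n\ge 3$, the $n-2\ge 1$ remaining conjugates have product of absolute value $<1$, so $|\lambda\lambda_1|>1$ by $\prod_i|\lambda_i|=|N_{E/\Q}(\lambda)|=1$, whence $|\lambda^{j-1}\lambda_1|\ge|\lambda\lambda_1|>1$ for $j\ge 2$. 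If $i+1=0$ the weight is $\sigma^{-1}(\lambda)^{j-1}\lambda$, of absolute value $\neq 1$ by the choice of $\lambda$. Corollary~\ref{main2} then yields a rational form $\liem^\Q\subseteq\lien^E$ which is Anosov, and by the remark following Theorem~\ref{main} its type equals that of $\lien^\Q$, namely $(n,\dots,n)$ with $c$ entries.

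The heart of the argument is the construction of $\lien^\Q$: one wants a Lie algebra whose first lower-central quotient has dimension $n$ but whose second and all later quotients again have dimension exactly $n$, which forces the cyclic bracket pattern $[X_i,X_{i+1}]=Y_i^{(2)}$ above; this $n$-cycle combinatorics is precisely where $n>2$ enters (for $n=2$ the pairs $\{i,i+1\}$ degenerate, and indeed type $(2,\dots,2)$ of class $\ge 2$ is excluded by conditions (ii)--(iii) of Section~\ref{intro}). The only genuinely number-theoretic point — the existence of a unit Pisot number with the prescribed extra inequalities in a real cyclic field of each degree $n$ — is of the kind supplied by the Appendix; everything else (the Jacobi identity, the lower central series, and the compatibility conditions of Corollary~\ref{main2}) is routine.
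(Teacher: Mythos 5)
Your construction is essentially the paper's own proof: the same cyclic Lie algebra with brackets $[X_i,X_{i+1}]=Y_i^{(2)}$, $[X_i,Y_i^{(j)}]=Y_i^{(j+1)}$, the same weights $\lambda_i^{j-1}\lambda_{i+1}$, the same index-shifting representation of the cyclic Galois group, and the same application of Corollary~\ref{main2}. One small remark: the extra condition $|\sigma^{-1}(\lambda)^{j-1}\lambda|\neq 1$ cannot be extracted from the Appendix lemma as stated (a set defined by ``$\neq$'' conditions is not closed under addition; one would instead impose $|\sigma^{-1}(\lambda)\lambda|<1$), but it is in fact superfluous, since every unit Pisot number satisfies the full rank condition and all conjugates are real, so these weights automatically have absolute value different from $1$ when $n>2$.
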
 

\begin{proof}
Let $E \supseteq \Q$ be a real Galois extension with cyclic Galois group of order $n$ and let $\sigma \in \Gal(E,\Q)$ be a generator. Take $\lambda_1 \in E$ an unit Pisot number and consider the Galois conjugates $\lambda_1,\lambda_2 = \sigma(\lambda_1),\ldots, \lambda_n = \sigma^{n-1}(\lambda_1)$. Define the algebraic units $$\mu_{i,j} = \lambda_i^{j-1} \sigma(\lambda_i)$$ for all $i \in \{1,\ldots, n\}$ and $j \in \{1,\ldots, c\}$, where the algebraic units $\lambda_i$ occur as the $\mu_{i,j}$ with $j=1$. Let $\lien^\Q$ be the Lie algebra with basis $X_{\mu_{i,j}}$ for all $i\in \{1,\ldots,n\}$ and $j \in \{1,\ldots, c\}$, with Lie bracket given by 
\begin{align*}
[X_{\lambda_i},X_{\mu_{i,j}}] &= X_{\mu_{i,j+1}}
\end{align*}
for all $i\in \{1,\ldots,n\}$ and $j \in \{1, \ldots, c\}$ and all other brackets $0$. It is easy to see that the Jacobi identity holds and thus that $\lien^\Q$ is indeed a Lie algebra.

The linear map $h: \lien^\Q \to \lien^\Q$ defined by $h(X_{\mu_{i,j}}) = X_{\sigma(\mu_{i,j})}$ is an automorphism of this Lie algebra of order $n$. So the map $\rho: \sigma \mapsto h$ defines a representation $\rho: \Gal(E,\Q) \to \Aut(\lien^\Q)$. This Lie algebra and the representation $\rho$ satisfy the conditions of Corollary \ref{main2} (where the spaces $V_\lambda$ are one-dimensional and spanned by the basis vectors) and thus there exists a rational form of $\lien^E$ which is Anosov. The type of this rational form is equal to the type of $\lien^\Q$.
\end{proof}

The case where $n = 3$ gives an answer to Question \ref{q2}. It is an open question to determine all possibilities for the types $(n_1,\ldots, n_c)$ of Anosov Lie algebras with $n_1 = n_2 = 3$. To solve this problem, a careful study of the conjugates of algebraic units of degree $3$ is needed.

\section*{Appendix}
\label{app}

An important ingredient we used during this article is the existence of unit Pisot numbers in a real Galois extension $E$ of $\Q$. In this appendix we extend some results of \cite{dd13-1} about $c$-hyperbolic units to unit Pisot numbers with extra conditions on them.

\smallskip

If $E$ is a real Galois extension of $\Q$ of degree $n$, then we call an algebraic integer a Pisot number if $ \lambda  > 1$ and for all $1 \neq \sigma \in \Gal(E,\Q)$, it holds that $\vert \sigma(\lambda) \vert  <1$. An unit Pisot number is then an algebraic unit which is also a Pisot number. Denote by $U_E$ the algebraic units of $E$ and by $\sigma_1, \ldots, \sigma_n$ all elements of the Galois group $\Gal(E,\Q)$ with $\sigma_1 = 1$.  From Dirichlet's Unit Theorem we know that the map
\begin{align*}
l: U_E \to \R^{n}: \lambda \mapsto \Big( \log  \big( \vert \sigma_1\left(\lambda\right)\vert\big) , \ldots, \log\big( \vert \sigma_n \left(\lambda \right) \vert \big) \Big)
\end{align*}
maps $U_E$ onto a cocompact lattice of the subspace $V \subseteq \R^n$, where $V$ is given by the equation $x_1 + \ldots + x_n = 0$. The unit Pisot numbers are mapped to the open subset $O \subseteq V$ given by the equations $x_1 > 0$ and $x_i < 0$ for all $i \geq 2$. So for the existence of unit Pisot numbers, one has to show that $O \cap l(U_E) \neq \emptyset$. The following lemma asserts that this is indeed the case:

\begin{Lem*}
\label{simpel}
Let $L \subseteq \R^n$ be a cocompact lattice and $O \subseteq \R^n$ a nonempty open subset such that for all $v_1, v_2 \in O$ also $v_1 + v_2 \in O$. Then $O \cap L \neq \emptyset$. 
\end{Lem*}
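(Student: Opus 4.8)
The plan is to exploit the fact that $O$ is an open sub-semigroup of $\R^n$ under addition: such a set must contain translates of arbitrarily large balls, and once a ball is large enough it is forced to meet any cocompact lattice. So the whole argument reduces to producing a large ball inside $O$ and quoting the finiteness of the covering radius of $L$.

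Concretely, I would first pick any point $v_0 \in O$ and, since $O$ is open, choose $\varepsilon > 0$ with the ball $B(v_0,\varepsilon)$ contained in $O$. Iterating the hypothesis that $O$ is closed under addition, the $k$-fold sumset $\underbrace{B(v_0,\varepsilon) + \dots + B(v_0,\varepsilon)}_{k\ \text{times}}$ is contained in $O$ for every $k \geq 1$. A short computation with the triangle inequality identifies this sumset as the ball $B(k v_0, k\varepsilon)$ (a point $x_1+\dots+x_k$ with each $\Vert x_i - v_0\Vert < \varepsilon$ lies within $k\varepsilon$ of $kv_0$, and conversely any $z$ with $\Vert z - k v_0\Vert < k\varepsilon$ is $k$ copies of $v_0 + \tfrac1k(z-kv_0)$). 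Hence $O$ contains a ball of radius $k\varepsilon$ for every $k$, i.e. balls of arbitrarily large radius.

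Next I would use cocompactness of $L$ in the form: there is $R > 0$ with $L + \overline{B(0,R)} = \R^n$, i.e. every point of $\R^n$ is within distance $R$ of a point of $L$. Taking $k$ with $k\varepsilon > R$ and applying this to the point $k v_0$ produces $\ell \in L$ with $\Vert k v_0 - \ell\Vert \leq R < k\varepsilon$, hence $\ell \in B(k v_0, k\varepsilon) \subseteq O$. This exhibits a point of $O \cap L$, which proves the lemma.

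I do not expect a genuine obstacle here; the two small points that need to be stated with care are the Minkowski-sum identity $B(a,r)+B(b,s) = B(a+b,r+s)$ and the standard equivalence between cocompactness of $L$ and the existence of a finite covering radius $R$. It is also worth noting that the full semigroup hypothesis is more than is needed: it suffices that $O$ contains one ball $B(v_0,\varepsilon)$ together with all of its iterated self-sums.
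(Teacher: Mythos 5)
Your proof is correct, but it takes a different route from the one in the paper. The paper's argument is a two-line density argument: since $L$ is a cocompact (hence full-rank) lattice, its rational span $L \otimes \Q$ is dense in $\R^n$, so the open set $O$ contains some point $x = \ell/m$ with $\ell \in L$ and $m \in \N_0$; iterated self-addition inside $O$ then gives $mx = \ell \in O \cap L$. You instead grow balls: from one ball $B(v_0,\varepsilon) \subseteq O$ the semigroup property yields $B(kv_0,k\varepsilon) \subseteq O$ for all $k$, and the finite covering radius $R$ of the cocompact lattice forces a lattice point into $B(kv_0,k\varepsilon)$ once $k\varepsilon > R$. Both proofs hinge on the same mechanism (iterated self-addition of a single chosen object to scale it up), but they quote different standard facts about cocompact lattices: density of the rational span versus finiteness of the covering radius. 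The paper's version is shorter and avoids the Minkowski-sum identity $B(a,r)+B(b,s)=B(a+b,r+s)$; yours is slightly more robust, as you note, since it only needs $O$ to contain one ball together with its iterated self-sums rather than the full semigroup hypothesis, and it produces an explicit quantitative bound on how far one must scale. Your two flagged points (the sumset identity and the equivalence of cocompactness with a finite covering radius) are indeed the only details to write out, and both are standard.
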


\begin{proof}
Since $O$ is open and $L \otimes \Q$ is dense, there exists $x \in O \cap L \otimes \Q$. By taking $n x = x + \ldots + x$ for some $n \in \N_0$, we find $x \in L \cap O$.
\end{proof}

\noindent So this lemma implies that there exist unit Pisot numbers in every real Galois extension $E \neq \Q$. 
The lemma also implies that every open nonempty subset of $V$ which is invariant under addition gives rise to possible algebraic units. For example, there also exists unit Pisot numbers with an extra condition on them:

\begin{Prop*}
\label{existence}
Let $E$ be a real Galois extension of $\Q$ and fix some $\sigma \in \Gal(E,\Q)$ with $\sigma \neq 1$. Then there always exists an unit Pisot number $\lambda \in E$ such that $ \vert \sigma(\lambda^2) \lambda \vert < 1$. 
\end{Prop*}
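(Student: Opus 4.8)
The plan is to produce the desired units as lattice points of a suitable open cone and invoke the Lemma above. Recall from the discussion preceding that Lemma that the logarithmic embedding $l : U_E \to \R^n$, $\mu \mapsto \big(\log|\sigma_1(\mu)|, \ldots, \log|\sigma_n(\mu)|\big)$, carries $U_E$ onto a cocompact lattice $L$ of the hyperplane $V$, with $\sigma_1 = 1$, and that under $l$ the unit Pisot numbers correspond exactly to the open subset $O \subseteq V$ defined by $x_1 > 0$ and $x_i < 0$ for all $i \geq 2$. Write the given element as $\sigma = \sigma_k$, so that $k \neq 1$. For a unit $\mu$ with $l(\mu) = x$, taking logarithms turns the condition $|\sigma(\mu^2)\mu| < 1$ into the homogeneous linear inequality $2x_k + x_1 < 0$. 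Hence I would set $\tilde{O} = O \cap \{x \in V \mid 2x_k + x_1 < 0\}$ and aim to show $\tilde{O} \cap L \neq \emptyset$.

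First I would observe that $\tilde{O}$ is open in $V$ and stable under addition, being the intersection of $V$ with finitely many open half-spaces each defined by a homogeneous linear inequality; thus the Lemma will apply to $\tilde{O}$ once nonemptiness is established. Checking that $\tilde{O} \neq \emptyset$ is the only computational point. Since $E \neq \Q$ we have $n \geq 2$. If $n = 2$, then $k = 2$ and on $V$ one has $x_2 = -x_1$, so $2x_2 + x_1 = -x_1$ and $\tilde{O} = \{x \in V \mid x_1 > 0\} \neq \emptyset$. If $n \geq 3$, I would exhibit an explicit point, for instance $x_k = -1$, $x_i = -\varepsilon$ for $i \in \{2,\ldots,n\} \setminus \{k\}$, and $x_1 = 1 + (n-2)\varepsilon$ so that $x \in V$; for small $\varepsilon > 0$ all the inequalities hold since $2x_k + x_1 = -1 + (n-2)\varepsilon < 0$.

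Finally, applying the Lemma inside the finite-dimensional space $V$ (in which $L$ is cocompact) gives a point of $\tilde{O} \cap L$, and its $l$-preimage is a unit Pisot number $\lambda \in E$ with $|\sigma(\lambda^2)\lambda| < 1$, as desired. I do not expect a genuine obstacle here: the content of the argument is the correct passage to logarithms, which replaces the multiplicative conditions by homogeneous inequalities, automatically preserved under the addition required by the Lemma, after which the Lemma does all the work. The same scheme handles the variant condition $|\lambda\, \sigma^n(\lambda^2)| < 1$ used in the proof of Theorem \ref{csig}: one only replaces $\sigma_k$ by $\sigma^n$ and the inequality accordingly, and checks nonemptiness in exactly the same way.
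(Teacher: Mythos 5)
Your proposal is correct and follows essentially the same route as the paper: pass to the logarithmic embedding, cut the Pisot cone $O$ by the extra homogeneous inequality $x_1 + 2x_k < 0$, and apply the lemma to this open, addition-stable, nonempty subset of $V$. Your explicit verification of nonemptiness is a detail the paper leaves implicit, but the argument is the same.
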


\begin{proof}
Assume that $\sigma_2$ of the map $l$ given above is equal to $\sigma$. Let $O \subseteq V$ be the open nonempty subset of $V$ given by $x_1 > 0, x_i < 0$ for all $i \geq 2$ and $x_1 + 2 x_2 <0$, then there exists $x \in O \cap l(U_E)$ because of the previous lemma. Any element of the preimage of $x$ will satisfy the conditions of the proposition.
\end{proof}



\begin{center}
\textbf{Acknowledgments}
\end{center}
I would like to thank my advisor Karel Dekimpe for his useful comments on a first version of this article and the referee for his/her remarks which have improved this paper. 

\bibliography{G:/algebra/ref}
\bibliographystyle{G:/algebra/ref}

\end{document}